\newtheorem{theorem}{\bf Theorem}
\newtheorem{proposition}{\bf Proposition}
\newtheorem{lemma}{\bf Lemma}
\theoremstyle{definition}
\newtheorem{remark}{\bf Remark}
\newtheorem{definition}{\bf Definition}
\newtheorem{example}{\bf Example}
\numberwithin{equation}{section}
\theparentequation\alph{equation}}
\begin{document}
\title[A note on Huisken monotonicity-type formula for the MCF]{A note on Huisken monotonicity-type formula for the mean curvature flow in a gradient shrinking extended Ricci soliton background}

\author[José N.V. Gomes]{José N.V. Gomes$^1$\orcidlink{0000-0001-5678-4789}}
\author[Matheus Hudson]{Matheus Hudson$^2$\orcidlink{0000-0001-9786-1234}}
\author[Hikaru Yamamoto]{Hikaru Yamamoto$^3$\orcidlink{0000-0002-7047-2914}}

\address{$^{1}$Department of Mathematics, Universidade Federal de São Carlos, Rod. Washington Luís, Km 235, 13565-905 São Carlos, São Paulo, Brazil.}
\address{$^2$ Institut für Mathematik, Carl von Ossietzky Universität Oldenburg, 26129 Oldenburg, Germany.}
\address{$^3$Department of Mathematics, Faculty of Pure and Applied Science, University of Tsukuba, 1-1-1 Tennodai, Tsukuba, Ibaraki 305-8571, Japan.}

\email{$^1$jnvgomes@ufscar.br}
\email{$^{2}$matheus.hudson.gama.dos.santos@uol.de}
\email{$^3$hyamamoto@math.tsukuba.ac.jp}

\urladdr{$^{1}$https://www.ufscar.br}
\urladdr{$^2$https://uol.de}
\urladdr{$^3$https://nc.math.tsukuba.ac.jp}

\keywords{Mean curvature flow, extended Ricci flow, Huisken monotonicity-type formula, Cheeger-Gromov convergence.}
\subjclass[2020]{53E10, 53E20}

\begin{abstract}
We give an application of a Huisken monotonicity-type formula for the mean curvature flow in a compact smooth manifold with a Riemannian metric that evolves by a shrinking self-similar solution of the extended Ricci flow. Our investigation builds on previous articles by Huisken and the third author, as we apply their techniques to establish new results in this geometric setting. Moreover, under some natural geometric assumptions, the noncompact case is also solved.
\end{abstract}
\maketitle

\section{Introduction}
An important geometric flow in the setting of Riemannian manifolds is the celebrated mean curvature flow, which falls in the class of extrinsic geometric flows. It is well known that the mean curvature flow (MCF, for short) has been a constant object of investigation, and it has experienced a great development in the last few decades. Here, it is worth mentioning some results in the literature that are related to our work. Huisken~\cite{huisken1990asymptotic} showed that the shrinking self-similar solutions to the MCF in Euclidean space appear as stationary points for the Gaussian area-type functional playing the role of the energy-type functional, which is non-increasing along the flow. Lott~\cite{lott2012mean} worked on the MCF in a compact smooth manifold with boundary and Riemannian metric that evolves by Ricci flow, and then he found a Huisken monotonicity-type formula in the special case of self-similar solutions to the Ricci flow. Magni, Mantegazza and Tsatis~\cite{magni2013flow} also found a Huisken monotonicity-type formula for the MCF in an ambient smooth manifold with Riemannian metric that evolves by a self-similar solution to the Ricci flow. Recently, the first and second authors~\cite{gomes2023mean} showed a Huisken monotonicity-type formula for the MCF in an ambient space with a Riemannian metric that evolves by a self-similar solution of the extended Ricci flow.

In \cite{magni2013flow}, special emphasis was given for the possible generalization of Huisken's monotonicity formula and its connection with the validity of some Li-Yau-Hamilton differential Harnack-type inequalities in the setting of submanifolds in an ambient Riemannian manifold evolving by Ricci or backward Ricci flow, whereas in \cite{lott2012mean} it was proved a relation between the mean curvature solitons and an extension of Hamilton’s differential Harnack expression which vanishes on the steady case, recovering classical results known for translating solitons to the MCF in Euclidean space (see Hamilton~\cite[Def.~4.1 and Lem.~3.2]{hamilton1995harnack}). In~\cite{gomes2023mean}, special attention has been given for the mean curvature solitons, for instance, an extension of Hamilton's differential Harnack expression naturally appears in the more general context of mean curvature solitons in a compact smooth manifold with boundary and Riemannian metric evolving by extended Ricci flow, whose characterization of nullity should be on the steady case. Besides, it was given a way on how to construct examples of these mean curvature solitons, and also, a characterization of an arbitrary family of such solitons was proved. Before that, the third author showed a characterization of a family of self-shrinkers in a gradient shrinking Ricci soliton (see~\cite[Sect.~4]{yamamoto2020meancurvature}).

Huisken also proved, by means of his monotonicity formula, that the MCF converges, after rescaling, to a self-shrinker in Euclidean space when it develops a singularity of type-I, i.e., the growth rate estimate for the norm of the second fundamental form is bounded. In the same way, the third author gave an application of the Huisken monotonicity-type formula obtained by Lott in the case of gradient shrinking Ricci solitons. He showed that the MCF converges to a self-shrinker soliton with the additional uniformity conditions of bounded geometry of the ambient space.

In this note, we follow the approach in \cite{yamamoto2020meancurvature} to give an application of the Huisken monotonicity-type formula for the context of the MCF in a gradient shrinking extended Ricci soliton background obtained in \cite{gomes2023mean}. The most appropriate setting to do this is to consider the family $\mathscr F$ and its associated normalized family $\widetilde{\mathscr F}$ as defined in Remark~\ref{Setting-MCF}. Precisely, the main upshot of our investigation is the following convergence theorem.

\begin{theorem}\label{application:Huisken}
Assume that $(M,g)$ is an $n(\geqslant3)-$dimensional compact Riemannian manifold, and let $(\overline{g}(t),\overline{w}(t))$ be a shrinking self-similar solution to the extended Ricci flow on $M$ with potential function $\overline{f}$ and initial value $(g,w)$. Given an $(n-1)-$dimensional compact smooth manifold $\Sigma$ without boundary, and let $\mathscr F$ be the MCF of $\Sigma$ in a gradient shrinking extended Ricci soliton background which develops a singularity of type-I. Consider the  normalized MCF $\widetilde{\mathscr{F}}$ in $(M,g)$. Then, for any sequence $s_{1}<s_{2}<\cdots<s_{j}<\cdots \rightarrow \infty$ and points $\left\{p_{j}\right\}_{j=1}^{\infty}$ in $\Sigma$, there exist subsequences $s_{j_{k}}$ and $p_{j_{k}}$ such that the family of immersion maps $\widetilde x_{s_{j_k}}: \Sigma \to (M,g)$ from pointed manifolds $\big(\Sigma,  p_{j_{k}}\big)$ converges to an immersion map $x_\infty: \Sigma_\infty \to (M,g)$ from an $(n-1)-$dimensional complete pointed Riemannian manifold $\left(\Sigma_{\infty}, x_\infty^*g, p_{\infty}\right)$ in the $C^\infty$ Cheeger-Gromov sense. Furthermore, $(\Sigma_{\infty}, x_\infty^* g)$ is an $f_\infty-$minimal hypersurface of  $(M, g),$ where $f_\infty = f \circ x_\infty$. 
\end{theorem}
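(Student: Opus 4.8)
The plan is to follow the blow-up and compactness strategy pioneered by Huisken and adapted to the soliton setting by the third author in \cite{yamamoto2020meancurvature}, now feeding in the Huisken monotonicity-type formula for the extended Ricci soliton background established in \cite{gomes2023mean}. I would organize the argument around three pillars: a monotonicity conclusion that forces the soliton defect to vanish on the limit, a package of uniform geometric estimates coming from the type-I hypothesis together with the compactness of $M$, and a Cheeger--Gromov compactness theorem for pointed immersions that produces the limit.

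First, I would record the normalization. Using the self-similar structure of $(\overline{g}(t),\overline{w}(t))$ -- so that each $\overline{g}(t)$ is, up to scaling and the flow of the gradient of the potential $\overline{f}$, isometric to the fixed metric $g$ -- I rewrite the family $\mathscr{F}$ as the normalized family $\widetilde{\mathscr{F}}$ of immersions $\widetilde{x}_s:\Sigma\to(M,g)$ into the single ambient metric $g$, exactly as in Remark~\ref{Setting-MCF}. I would then apply the monotonicity-type formula from \cite{gomes2023mean}: along $\widetilde{\mathscr{F}}$ the weighted area functional $\Theta(s)$ is nonincreasing, with $\tfrac{d}{ds}\Theta(s)=-\int_{\Sigma}|\mathcal{D}|^2\,d\widetilde{\mu}_s\leqslant 0$, where $\mathcal{D}$ denotes the normalized velocity, i.e.\ the mean curvature soliton defect whose vanishing is precisely the $f$-minimal equation. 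Since $\Theta$ is bounded below, integrating over $s\in[s_1,\infty)$ gives $\int_{s_1}^{\infty}\!\int_{\Sigma}|\mathcal{D}|^2\,d\widetilde{\mu}_s\,ds<\infty$; hence, for any fixed $\delta>0$, the windowed integrals $\int_{s_j-\delta}^{s_j+\delta}\!\int_{\Sigma}|\mathcal{D}|^2\,d\widetilde{\mu}_s\,ds$ tend to $0$ as $j\to\infty$, being the tail of a convergent integral. This holds for the \emph{given} sequence $s_j\to\infty$, with no subsequence needed at this stage.

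Second, I would establish the uniform bounds that make a limit extractable. The type-I singularity hypothesis yields a uniform bound $|\widetilde{A}|\leqslant C$ on the second fundamental form of the normalized flow; because $M$ is compact, the metric $g$ has bounded curvature, bounded covariant derivatives of curvature, and injectivity radius bounded below. Feeding these into the evolution equations for $\widetilde{A}$ and applying interior parabolic (Shi-type) estimates, I would bootstrap to uniform bounds $|\widetilde{\nabla}^k\widetilde{A}|\leqslant C_k$ for every $k$, valid on short time windows around each $s_j$. The compactness of $M$ also supplies the uniform local area and injectivity-radius control that prevents collapse. I expect this bootstrapping -- carried out uniformly in $s$ while keeping careful track of the evolving-but-self-similar background and transferring all quantities to the fixed target $(M,g)$ -- to be the main technical obstacle.

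Third, with these bounds in hand I would invoke a Cheeger--Gromov-type compactness theorem for pointed immersions, in the form used in \cite{yamamoto2020meancurvature}: given the sequence $s_j\to\infty$ and base points $p_j$, the uniform estimates yield subsequences $s_{j_k}$ and $p_{j_k}$ along which the pointed immersions $\widetilde{x}_{s_{j_k}}:(\Sigma,p_{j_k})\to(M,g)$ converge in $C^\infty$ to an immersion $x_\infty$ of a complete $(n-1)$-dimensional manifold $(\Sigma_\infty,x_\infty^*g,p_\infty)$; the same estimates on the time windows give convergence of the shifted normalized flows to a limit flow through $x_\infty$. Finally I would identify the limit. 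Passing the windowed estimate of the second paragraph to the limit flow gives $\int_{-\delta}^{\delta}\!\int_{\Sigma_\infty}|\mathcal{D}_\infty|^2\,d\mu=0$, so $\mathcal{D}_\infty\equiv 0$ and the limit flow is static; equivalently, the central slice $x_\infty$ is a fixed point of the normalized flow. Since $\mathcal{D}=0$ is exactly the $f_\infty$-minimal hypersurface equation with $f_\infty=f\circ x_\infty$, this identifies $(\Sigma_\infty,x_\infty^*g)$ as an $f_\infty$-minimal hypersurface of $(M,g)$, completing the proof.
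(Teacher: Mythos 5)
Your proposal is correct and follows the same overall skeleton as the paper (monotonicity formula, type-I estimates bootstrapped to all derivatives of the second fundamental form, Cheeger--Gromov compactness of pointed immersions, then identification of the limit), but the final identification step is genuinely different. The paper never constructs a parabolic limit flow: it only extracts convergence of the \emph{time slices} $\widetilde x_{s_{j_k}}$, and to show the defect vanishes in the limit it proves a quantitative second-derivative bound, Lemma~\ref{Lem:4.10}, asserting that $\frac{d}{ds}\int_\Sigma (H+e_s(f\circ\widetilde x_s))^2 e^{-f\circ\widetilde x_s}\,dA$ is uniformly bounded (this in turn rests on the auxiliary weighted-volume bound of Lemma~\ref{prop:4.7}). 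Combined with the finiteness of $\int_{-\log T}^\infty\int_\Sigma (H+e_s(f\circ\widetilde x_s))^2 e^{-f\circ\widetilde x_s}\,dA\,ds$ from the monotonicity formula (Lemma~\ref{Prop:4.6}), a contradiction/real-analysis argument (a nonnegative function with integrable tail and Lipschitz control must tend to zero) forces the slice-wise defect integral to vanish along the whole sequence, which then passes to compact subsets of $\Sigma_\infty$. Your route replaces Lemmas~\ref{prop:4.7} and~\ref{Lem:4.10} by windowed space-time integrals, convergence of the \emph{shifted flows} on time windows, and Fatou's lemma; this is the more classical Huisken-style blow-up argument, and it is valid, but it requires the heavier machinery of a compactness theorem for flows of immersions (space-time Arzel\`a--Ascoli, including control of time derivatives via the flow equation), whereas the paper's trick needs only slice-wise convergence. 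The trade-off: you avoid two technical lemmas at the cost of a parabolic limit construction; the paper avoids limit flows at the cost of proving the derivative bound of Lemma~\ref{Lem:4.10}.

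Two smaller points you should tighten. First, attributing the non-collapsing of the hypersurfaces to compactness of $M$ alone is too loose: the injectivity radius of $(\Sigma,\widetilde x_{s_j}^*g)$ is not controlled by the ambient geometry by itself; the paper obtains it from the uniform bound on $\mathcal A$ together with ambient bounded geometry via Thm.~2.1 of Chen--Yin \cite{chen2007uniqueness}. Second, the paper's pointed-immersion compactness is not merely invoked but reconstructed through a Nash embedding $\Theta:(M,g)\to(\mathbb R^d,g_{\rm st})$ with bounded derivatives of $\mathcal A(\Theta)$, precisely so that the argument generalizes verbatim to the noncompact bounded-geometry setting of Section~\ref{Sec-ncompcase}; if you simply cite the compact-case statement from \cite{yamamoto2020meancurvature}, that extension is lost.
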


This note is structured as follows. We begin in Section~\ref{SecMCFERFB} by defining and commenting upon the concepts required to lay the groundwork for our proofs. In Section~\ref{sect:apllication}, we proceed with the proofs of preliminary results which are formulated in the more general context of complete Riemannian manifolds with bounded geometry. This section ends with the proof of the main theorem. In Section~\ref{Sec-ncompcase} we prove our main theorem for the case of a complete noncompact Riemannian manifold with some additional uniformity conditions. We conclude this note by appending an example of an $f$-minimal hypersurface of a Euclidean spherical cap and showing how to construct a family of mean curvature solitons for the MCF in a gradient extended Ricci soliton background by means of radial smooth functions on the Euclidean space.

\section{Mean curvature flow in an extended Ricci flow background}\label{SecMCFERFB}

Consider an $n (\geqslant 3)-$dimensional smooth manifold $M$ and a solution $(g(t), w(t))$ to the \emph{extended Ricci flow}
\begin{align}\label{ListEq}
\left\{
\begin{array}{lcl}
\frac{\partial}{\partial t}g(t) = - 2\operatorname{Ric}_{g(t)} + 2\alpha_n dw(t)\otimes dw(t),\\[1ex]
\frac{\partial}{\partial  t}w(t) =  \Delta_{g(t)} w(t),
\end{array}
\right.
\end{align}
in $M \times I,$ for some initial value $(g,w).$ Here and throughout the paper, $\alpha_n=(n - 1)/(n - 2)$, $\operatorname{Ric}_{g(t)}$ stands for the Ricci tensor of the Riemannian metric $g(t)$, the Laplacian operator $\Delta_{g(t)}$ is the trace of the Hessian operator $\nabla_{g(t)}^2$ computed on $g(t)$, and $dw(t)\otimes dw(t)$ denotes the tensor product of the $1$-form $dw(t)$ by itself, which is metrically dual to gradient vector field $\nabla w(t)$ computed on $g(t)$ of a scalar smooth function $w(t)$ on $M.$ 

It is worth noting that List established a close connection between the Ricci flow and the static Einstein vacuum equations via the extended Ricci flow, thereby justifying the choice of $\alpha_n$ (see \cite[pp.~1010-1012]{Bernhard_List} for details). This provides an interesting and useful link between problems in low-dimensional topology and geometry and questions arising in general relativity. For a detailed account of the extended Ricci flow, including proofs of short- and long-time existence of solutions to \eqref{ListEq}, we refer to~\cite[Sects.~4 and 5]{Bernhard_List}.

Throughout the paper, $\alpha_n$ may be taken to be any positive constant. However, we prefer to keep the original version of the extended Ricci flow, as it is already well established in List’s framework. We shall follow the notation and terminology of \cite{gomes2023mean} and~\cite{yamamoto2020meancurvature}. In particular, whenever there is no danger of confusion, we simplify the notation by suppressing the parameter $t.$

A gradient extended Ricci soliton on $M$ is a self-similar solution $(\overline{g}(t), \overline{w}(t) )$ of~\eqref{ListEq} given by
\begin{align*}
\left\{
\begin{array}{lcl}
\overline{g}(t)  = \sigma(t)\psi_t^* g,\\ [1ex]
\overline{w}(t) = \psi_t^* w,
\end{array}
\right.
\end{align*}
for some initial value $(g,w)$, where $\psi_t$ is a  smooth one-parameter family of diffeomorphisms of $M$ generated from the flow of $\nabla_g f/\sigma(t)$, for some $f\in C^\infty(M)$, and  $\sigma(t)$ is a smooth positive function on $t$. By setting $\overline{f}(t)=\psi_t^*f$,  from~\eqref{ListEq} we have
\begin{align}\label{mod_grad_Ricci_soliton}
\left\{
\begin{array}{lcl}
\operatorname{Ric}_{\overline{g}} + \nabla_{\overline{g}}^2\,\overline{f} - \alpha_n  d\overline{w} \otimes d\overline{w} = \dfrac{c}{2(T - t)}\overline{g},\\
\Delta_{\overline{g}} \overline{w} = \langle\nabla_{\overline{g}} \overline{f}, \nabla_{\overline{g}} \overline{w}\rangle_{\overline{g}},
\end{array}
\right.
\end{align}
where $c= 0$ in the steady case, for $t \in  \mathbb{R};$ $c = 1$ in the shrinking case, for $t \in (-\infty, T)$;  and $c = 1$ in the expanding case, for $t \in (T,+\infty).$ Moreover,
\begin{align}\label{self-solution}
\dfrac{\partial }{\partial t} \overline{f} = \left|\nabla_{\overline{g}} \overline{f}\right|_{\overline{g}}^2\,.
\end{align}
The function $\overline{f}$ is the so-called \emph{potential function}. 

As in \cite{gomes2023mean}, we consider the MCF in an ambient space with a complete Riemannian metric that evolves by an extended Ricci flow, as follows: let $(g(t),w(t))$ be an extended Ricci flow in $M \times I$. Given an $(n-1)-$dimensional compact smooth manifold $\Sigma$ without boundary, let $\{x(\cdot,t); \; t\in [0,T)\}$ be a smooth one-parameter family of immersions of $\Sigma$ in $M$.  For each $t\in [0,T),$ set  $x_t = x(\cdot, t)$ and $\Sigma_t$ for the hypersurface $x_t(\Sigma)$ of $(M,g(t)),$ which can be written as $\Sigma_t:=(\Sigma, x^*_t g(t))$. We say that the family $\mathscr F :=\{\Sigma_t;\; t\in [0,T)\}$ is a \emph{MCF in an extended Ricci flow background} if it evolves under MCF
\begin{align*}
\left\{
\begin{array}{rcl}
\frac{\partial}{\partial t}x(p,t) &=&  H(p,t) e(p,t),\\ [1ex]
x(p, 0) &=& x_0(p),
\end{array}
\right.
\end{align*}
where $H(p,t)$ and $e(p,t)$ are the mean curvature and the unit normal of $\Sigma_t$ at the point $p\in\Sigma$, respectively. In the particular case where $(g(t),w(t))=(\overline{g}(t),\overline{w}(t))$ is a gradient extended Ricci soliton on $M$, with potential function $\overline{f}$, we say that $\mathscr F$ is a \emph{MCF in a gradient extended Ricci soliton background}, and a hypersurface $\Sigma_t\in \mathscr F$ is a \emph{mean curvature soliton}, if
\begin{align*}
H(p,t)+e(p,t)\overline{f}=0 \quad \forall p\in\Sigma.
\end{align*}
Here, $e(\,\cdot\,,t)$ must be the inward unit normal vector field on $\Sigma_t.$ Note that the interval $I$ has an appropriate choice in each case of the flow. Besides, whenever there is no danger of confusion, we are writing  $g(t)$ on $\Sigma$ instead of $x_t^*g(t).$

For constructing a family of mean curvature solitons for the MCF in a gradient extended Ricci soliton background on $M$ from a smooth one-parameter family of diffeomorphisms $\psi_t$ of $M$ generated from the flow of $\nabla_g f/\sigma(t)$ and initial value $(g, w)$, it is enough to consider an $f$-minimal hypersurface (i.e., $H_g+e_0(f)=0$) of $(M,g)$, and then to proceed as in \cite[Thm.~3]{gomes2023mean} to obtain such a family. In Section~\ref{Concluding remarks}, we give an example of an $f$-minimal hypersurface of an Euclidean spherical cap and show how to construct a family of mean curvature solitons for the MCF in a gradient extended Ricci soliton background by means of radial smooth functions on Euclidean space. For explicit examples of MCF in a gradient Ricci soliton background, see~\cite{yamamoto2018examples}.

The MCF in a gradient shrinking extended Ricci soliton background develops a \emph{singularity of type-I} when there exists a constant $C > 1$ such that 
\begin{align}\label{singularity:type:I}
\max_{p \in \Sigma} |\mathcal A(p, t)| \leqslant  \frac{C}{\sqrt{T - t}}, 
\end{align}
where $\mathcal A(\cdot, t)$ stands for the second fundamental form of $\Sigma_t.$

To provide an application of a Huisken monotonicity-type formula for the context of the MCF in a gradient shrinking extended Ricci soliton background, we rewrite Thm.~2 of \cite{gomes2023mean} in a convenient way, as follows.

\begin{theorem}\label{Huisken_monotonicity}
Assume that $(M,g)$ is an $n(\geqslant 3)-$dimensional complete Riemannian manifold, and let $(\overline{g}(t),\overline{w}(t))$ be a shrinking self-similar solution to the extended Ricci flow on $M$ with potential function $\overline{f}$ and initial value $(g,w)$. Given an $(n-1)-$dimensional compact smooth manifold $\Sigma$ without boundary, and let $\mathscr F$ be the MCF of $\Sigma$ in a gradient shrinking extended Ricci soliton background. Denote by $dA_{\overline{g}}$ the $(n-1)$-dimensional Riemannian measure on $\Sigma$ induced by $\overline g(t)$, and consider the function $\mathscr A(t)$ given by
\begin{equation*}
(-\infty,T)\ni t\mapsto [4\pi(T-t)]^{-( n - 1 ) / 2}\int_\Sigma e^{-\overline{f}}dA_{\overline{g}}.   
\end{equation*}
Then
\begin{equation*}
\frac{d}{dt}\mathscr A(t) 
=  -  [4\pi(T-t)]^{-( n - 1 ) / 2}\int_\Sigma \left( H_{\overline g} + e_t  \overline f \right)^2  e^{-\overline f}dA_{\overline g}
\end{equation*}
for all $t\in[0,T).$ In particular, $\mathscr A(t)$ is non-increasing. Moreover, $\mathscr A (t)$ is constant if and only if $\mathscr F$ is a family of mean curvature solitons. 
\end{theorem}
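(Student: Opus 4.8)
The plan is to differentiate $\mathscr A(t)$ directly, treating it as the product of the scaling factor $u(t):=[4\pi(T-t)]^{-(n-1)/2}$ and the weighted area $\int_\Sigma e^{-\overline f}\,dA_{\overline g}$, and then to assemble the resulting terms into a perfect square by invoking the soliton identities \eqref{mod_grad_Ricci_soliton} and \eqref{self-solution}. The essential point is that the integrand carries \emph{two} sources of time dependence: the immersion $x_t$ moves by the MCF $\partial_t x = He$, while the ambient metric $\overline g(t)$ itself evolves by the extended Ricci flow \eqref{ListEq}. Both must be tracked simultaneously.

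First I would record the evolution of the weighted measure. From $\partial_t\overline g=-2\operatorname{Ric}_{\overline g}+2\alpha_n\,d\overline w\otimes d\overline w$, tracing over $T\Sigma$ and adding the standard first-variation term $-H^2$ produced by the normal motion $He$ gives
\begin{equation*}
\partial_t\,dA_{\overline g}=\big(-H^2-\operatorname{Ric}^\Sigma_{\overline g}+\alpha_n|\nabla^\Sigma\overline w|^2\big)\,dA_{\overline g},
\end{equation*}
where the superscript $\Sigma$ denotes the trace (respectively gradient) restricted to $T\Sigma$. For the weight, since $\overline f=\overline f(x(p,t),t)$ depends on both $M$ and $t$, the chain rule together with the self-similarity relation \eqref{self-solution}, $\partial_t\overline f=|\nabla_{\overline g}\overline f|^2$, and the splitting of $\nabla\overline f$ into tangential and normal parts yields $\tfrac{d}{dt}\overline f=|\nabla^\Sigma\overline f|^2+(e\overline f)^2+H(e\overline f)$, hence $\partial_t e^{-\overline f}=-e^{-\overline f}\big(|\nabla^\Sigma\overline f|^2+(e\overline f)^2+H(e\overline f)\big)$. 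Differentiating $u(t)$ contributes the factor $\tfrac{n-1}{2(T-t)}$ to the integrand.

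Collecting these, $\tfrac{d}{dt}\mathscr A(t)=u(t)\int_\Sigma[\,\cdots\,]e^{-\overline f}\,dA_{\overline g}$, where the bracket contains the scaling term, the two gradient terms, the curvature terms, and the mean-curvature terms. Since $\Sigma$ is closed, I would integrate by parts against the weighted measure to replace $|\nabla^\Sigma\overline f|^2$ by $\Delta^\Sigma\overline f$ inside the integral. Taking the $T\Sigma$-trace of the first equation in \eqref{mod_grad_Ricci_soliton} (with $c=1$) and using the Gauss-formula identity $\operatorname{tr}_\Sigma\nabla^2\overline f=\Delta^\Sigma\overline f-H(e\overline f)$ then gives
\begin{equation*}
\frac{n-1}{2(T-t)}-\Delta^\Sigma\overline f-\operatorname{Ric}^\Sigma_{\overline g}+\alpha_n|\nabla^\Sigma\overline w|^2=-H(e\overline f).
\end{equation*}
Substituting this into the bracket collapses every curvature and scaling term and leaves exactly $-H^2-2H(e\overline f)-(e\overline f)^2=-(H+e\overline f)^2$, which is the claimed formula.

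The final two assertions are then immediate: the integrand $(H_{\overline g}+e_t\overline f)^2e^{-\overline f}$ is nonnegative, so $\mathscr A$ is non-increasing, and $\tfrac{d}{dt}\mathscr A\equiv0$ forces $H+e\overline f\equiv0$ on $\Sigma\times[0,T)$, which is precisely the mean curvature soliton condition. I expect the main obstacle to be bookkeeping rather than ideas: one must keep the two time dependencies straight and the sign conventions for $e$, $H$, and $\mathcal A$ consistent, and then verify that the traces of \eqref{ListEq} and of the soliton equation \eqref{mod_grad_Ricci_soliton} produce exactly the combination that cancels against the integration-by-parts term to form the perfect square.
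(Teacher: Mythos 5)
Your proposal is correct: the variation of the area element under the coupled motion, the chain rule for $\overline f$ via \eqref{self-solution}, the weighted integration by parts on the closed $\Sigma$, and the collapse to $-(H+e\overline f)^2$ via the $T\Sigma$-trace of \eqref{mod_grad_Ricci_soliton} combined with $\operatorname{tr}_\Sigma\nabla^2\overline f=\Delta^\Sigma\overline f-H(e\overline f)$ all check out, and the rigidity statement follows as you say. Note, however, that the paper itself offers no proof to compare against: Theorem~\ref{Huisken_monotonicity} is explicitly a restatement of Thm.~2 of \cite{gomes2023mean}, imported as a black box. In that reference (and in the machinery this paper does display, e.g.\ the use of Prop.~3.2 of \cite{yamamoto2020meancurvature} inside Lemma~\ref{prop:4.7}), the monotonicity formula is obtained by first proving a general first-variation identity for weighted areas $\int u_t\, x_t^*\rho_t\, dA_{\overline g}$ along an arbitrary flow coupled with an arbitrary metric deformation $h$, in which a conjugate-heat-type operator applied to the weight appears as an error term, and then specializing to $\rho=e^{-\overline f}$ and $h=-2\operatorname{Ric}_{\overline g}+2\alpha_n d\overline w\otimes d\overline w$, where the soliton identities kill that error. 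Your argument specializes from the start and carries out the same cancellation directly, which is shorter and self-contained, at the cost of not producing the general variational formula that the rest of the paper (notably Lemma~\ref{prop:4.7}, where the weight is $e^{-\overline f/2}$ rather than $e^{-\overline f}$ and the error terms do \emph{not} vanish) actually reuses.
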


The following remark is worth noting to justify the main setting of this note.

\begin{remark}\label{Setting-MCF}
Assume that $(M,g)$ is an $n(\geqslant3)-$dimensional complete Riemannian manifold, and let $(\overline{g}(t),\overline{w}(t))$ be a shrinking self-similar solution to the extended Ricci flow on $M$ with potential function $\overline{f}$ and initial value $(g,w)$. Given an $(n-1)-$dimensional compact smooth manifold $\Sigma$ without boundary, and let $\mathscr F :=\{\Sigma_t; \; t\in [0,T)\}$ be the MCF of $\Sigma$ in a gradient shrinking extended Ricci soliton background. Write $H_{\overline g}=H(p,t),$  $e_t=e(p,t)$ and $\widetilde{x}_{s} = \psi_t\circ x_t,$ for short. Setting $s=-\log (T- t)$ and $\widetilde{\Sigma}_s = (\Sigma,\widetilde{x}_{s}^{*}g),$ one has $s\in [-\log T,\infty)$, $\frac{ds}{dt}=\frac{1}{T- t}$ and $\frac{dt}{ds}=T- t.$ Since $\overline g(t)= (T- t)\psi^*_t g$, we have $H_{\overline{g}} =\frac{1}{\sqrt{T- t}} H_{\psi^*_t g}$ and $e_t = \frac{1}{\sqrt{T- t}}e_{\psi^*_t g}$. Then
\begin{align*}
\frac{\partial \widetilde x_s}{\partial s}=&  \left[ \frac{d \psi_t}{dt} \circ x_t +(\psi_t)_* \left(\frac{\partial x_t}{\partial t}\right)\right]\frac{dt}{ds} =  \frac{\nabla_{g}f\circ\psi_t}{T - t}  \circ x_t \cdot \frac{dt}{ds} + (T-  t)(\psi_t)_* \big(H_{\overline{g}}e_t\big) \\
=& \big(\nabla_g f + H_g\big)(\widetilde x_s) .
\end{align*}
We call the family $\widetilde{\mathscr F}:=\{\widetilde \Sigma_s;\; s\in [-\log T,\infty)\}$ the \emph{normalized MCF} in $(M, g).$
\end{remark}

As an application of Theorem~\ref{Huisken_monotonicity}, we prove our main result. By taking $w$ to be constant in \eqref{ListEq}, we recover Theorem~1.5 in \cite{yamamoto2020meancurvature} from Theorem~\ref{application:Huisken}. The proof is by means of the Arzelà-Ascoli theorem, which provides a sequence of isometric immersions on an exhaustion of $\Sigma_\infty$ converging to a limiting global solution which is $f_\infty$-minimal hypersurface of $(M, g)$. We prove global supremum estimates depending only on the initial bounds on the Riemann tensor $\operatorname{Rm}_g$ and Hessian operator $\nabla_g^2 w$, whereas the interior estimates depend on the full $C^\infty$ norm of $\overline g$ (see Section~\ref{sect:apllication}). This is possible since the estimates for the derivatives of $\overline g$ and $\overline w$ can be combined in the right way.

\section{An application of the Huisken monotonicity-type formula}\label{sect:apllication}

In this section, we prove Theorem~\ref{application:Huisken}, which is an application of a Huisken monotonicity-type formula for the context of MCF in a gradient extended Ricci soliton background. We begin with a brief discussion that establishes the groundwork for the compact and noncompact cases.

\begin{definition}[Bounded geometry]\label{DefBGeom}
We say that a complete Riemannian manifold $(M,g)$ has bounded geometry if for every integer $j\geqslant 0$ there exist positive constants $C_j$ and $\eta$ such that the Riemann curvature tensor $\operatorname{Rm}_g$ and the injectivity radius $\operatorname{inj}(M, g)$ satisfy
\begin{equation*}
|\nabla_g^j\operatorname{Rm}_g| < C_j  \quad \mbox{and} \quad \operatorname{inj}(M, g) \geqslant \eta>0.
\end{equation*}
\end{definition}

When $(g,w)$ is the initial value of a self-similar solution $(\overline{g}(t),\overline{w}(t))$ to the extended Ricci flow on a noncompact smooth manifold, we also assume that there exist positive constants $C'_j$ such that 
\begin{equation}\label{Additional-BG}
|\nabla_g^j (dw \otimes dw)| \leqslant C'_{j}
\end{equation}
for every integer $j \geqslant 0$. 

The motivation for this additional assumption stems from the proof of Theorem~7.5 by List (see item (b) of Lemma~7.6 in~\cite{Bernhard_List}). In particular, uniform bounds on all covariant derivatives of $w$ imply corresponding bounds on all covariant derivatives of $dw \otimes dw$. Of course, in the compact case, \eqref{Additional-BG} is satisfied.

Notice that $\mathbb{R}^n$ with the standard metric and compact Riemannian manifolds have bounded geometry. Actually, bounded geometry provides a natural generalization to the known settings of compact and Euclidean spaces. In particular, the class of manifolds of bounded geometry allows us to uniformly apply constructions that are well known for compact manifolds. For a way on how to construct complete manifolds of bounded geometry, we refer to Eldering~\cite{eldering2013normally}.

The notion of the convergence of immersions from pointed manifolds is defined as follows. It is the immersion map version of the Cheeger-Gromov convergence.

\begin{definition}\label{pointed:manifold}
Let $(M, g)$ be an  $n-$dimensional complete Riemannian manifold with bounded geometry. Assume that for each $k \geqslant 1$ we have an $m-$dimensional pointed manifold $\left(\Sigma_{k}, p_{k}\right)$ and an immersion $x_{k}: \Sigma_{k} \rightarrow M$. Then, we say that a sequence of immersions $\left\{x_{k}: \Sigma_{k} \rightarrow M\right\}_{k=1}^{\infty}$ converges to an immersion $x_{\infty}: \Sigma_{\infty} \rightarrow M$ from an $m-$dimensional pointed manifold $\left(\Sigma_{\infty}, p_{\infty}\right)$ if there exist
\begin{enumerate}
\item An exhaustion $\left\{U_{k}\right\}_{k=1}^{\infty}$ of $\Sigma_{\infty}$ with $p_{\infty} \in U_{k}$, and
\item A sequence of diffeomorphisms $\Psi_{k}: U_{k} \rightarrow V_{k} \subset \Sigma_{k}$ with $\Psi_{k}\left(p_{\infty}\right)=p_{k}$ such that the sequence of maps $x_{k} \circ \Psi_{k}: U_{k} \rightarrow M$ converges in $C^{\infty}$ to $x_{\infty}: \Sigma_{\infty} \rightarrow M$ uniformly on compact sets in $\Sigma_{\infty}$. 
\end{enumerate} 
\end{definition} 

In order to apply the Arzelá-Ascoli theorem, we need a type of uniform interior estimate for $|\widehat\nabla_g^k \mathcal A(\widetilde x_s)|$ on $\widetilde \Sigma_s\in\widetilde{\mathscr F}$ as done by Ecker and Huisken~\cite[Sect.~3]{ecker1991interior} for the MCF in Euclidean space and by the third author~\cite[Sect.~4]{yamamoto2020meancurvature} for the MCF in a Ricci flow background.

\begin{proposition}\label{Underthesamesatupofproposition01}
Assume that $(M,g)$ is an $n(\geqslant 3)-$dimensional Riemannian manifold with bounded geometry, and let $(\overline{g}(t),\overline{w}(t))$ be a shrinking self-similar solution to the extended Ricci flow satisfying \eqref{Additional-BG} on $M$ with potential function $\overline{f}$ and initial value $(g,w)$. Given an $(n-1)-$dimensional compact smooth manifold $\Sigma$ without boundary, and let $\mathscr F$ be the MCF of $\Sigma$ in a gradient shrinking extended Ricci soliton background which develops a singularity of type-I. Consider the  normalized MCF $\widetilde{\mathscr F}$ in $(M,g)$. Then, for every integer $k\geqslant0$ there is a positive constant $C_k$ which does not depend on $s$ such that
\begin{equation*}
|\widehat\nabla_g^k \mathcal A(\widetilde x_s)|_g \leqslant C_k \quad \mbox{on} \quad  \Sigma \times [-\log T, \infty),  
\end{equation*}
where $\widehat\nabla_g$ is defined from the Levi–Civita connection on $(\Sigma, \widetilde x^*_sg).$
\end{proposition}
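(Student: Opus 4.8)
The plan is to treat $\widetilde{\mathscr F}$ as an $f$-weighted (drift) mean curvature flow in the \emph{fixed} background $(M,g)$, prove the case $k=0$ by a scaling argument, and obtain the higher-order bounds by induction on $k$ through an Ecker--Huisken/Shi-type interior estimate. For $k=0$ I would use the conformal relation $\overline g(t)=(T-t)\psi_t^*g$ recorded in Remark~\ref{Setting-MCF}. Since $\widetilde x_s=\psi_t\circ x_t$ and $\psi_t\colon(M,\psi_t^*g)\to(M,g)$ is an isometry, the second fundamental form of $\widetilde x_s$ in $(M,g)$ agrees isometrically with that of $x_t$ in $(M,\psi_t^*g)$; scaling the ambient metric by the constant factor $(T-t)$ then gives $|\mathcal A(\widetilde x_s)|_g=\sqrt{T-t}\,|\mathcal A(x_t)|_{\overline g}$. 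The type-I hypothesis \eqref{singularity:type:I} thus yields $|\mathcal A(\widetilde x_s)|_g\leqslant C$ uniformly in $s$, which is the asserted bound for $k=0$.

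Before the induction I would record the uniform control of the ambient coefficients that enter the evolution equations. Along $\widetilde{\mathscr F}$ the metric $g$ is fixed and $\partial_s\widetilde x_s=(\nabla_g f+H_g)(\widetilde x_s)$, so the first-order drift $\nabla_g f$ enters only as an advection term, harmless for the maximum principle, whereas the coefficients that must be bounded are $\nabla_g^{j}\operatorname{Rm}_g$ and the derivatives $\nabla_g^{j}f$ of order $j\geqslant 2$. Bounded geometry (Definition~\ref{DefBGeom}) controls the former. For the latter, the soliton identity \eqref{mod_grad_Ricci_soliton}, pulled back by $\psi_t$ to the fixed slice, reads $\nabla_g^2 f=\tfrac{c}{2}\,g-\operatorname{Ric}_g+\alpha_n\,dw\otimes dw$, so for every $j\geqslant 0$ the tensor $\nabla_g^{j+2}f=-\nabla_g^{j}\operatorname{Ric}_g+\alpha_n\,\nabla_g^{j}(dw\otimes dw)$ is a universal contraction of $\nabla_g^{j}\operatorname{Rm}_g$ and $\nabla_g^{j}(dw\otimes dw)$. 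Hence Definition~\ref{DefBGeom} together with \eqref{Additional-BG} (automatic when $M$ is compact) bounds $\nabla_g^{j}f$ for all $j\geqslant 2$, uniformly and independently of $s$. This is the precise sense in which the derivatives of $\overline g$ and $\overline w$ combine to furnish the required coefficients.

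I would then induct on $k$. Granting uniform bounds on $|\widehat\nabla_g^{j}\mathcal A|$ for $j<k$, the evolution equation for $|\widehat\nabla_g^{k}\mathcal A|^2$ along the drift flow has the Simons--Bernstein form
\begin{equation*}
\Big(\frac{\partial}{\partial s}-\Delta\Big)|\widehat\nabla_g^{k}\mathcal A|^2\leqslant-2|\widehat\nabla_g^{k+1}\mathcal A|^2+P_k,
\end{equation*}
where $P_k$ is polynomial in $\mathcal A,\dots,\widehat\nabla_g^{k}\mathcal A$ and in the ambient quantities $\nabla_g^{\leqslant k}\operatorname{Rm}_g$ and $\nabla_g^{\leqslant k+2}f$; by the previous step the latter are uniformly bounded and by the inductive hypothesis $|\widehat\nabla_g^{j}\mathcal A|$ is bounded for $j<k$, so $P_k\leqslant C(1+|\widehat\nabla_g^{k}\mathcal A|^2)$. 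Testing the auxiliary function
\begin{equation*}
\Phi:=\big(s-(s_0-1)\big)\,|\widehat\nabla_g^{k}\mathcal A|^2+\beta_k\,|\widehat\nabla_g^{k-1}\mathcal A|^2
\end{equation*}
on $\Sigma\times[s_0-1,s_0]$ against the maximum principle, with $\beta_k$ chosen large enough that the good term $-2\beta_k|\widehat\nabla_g^{k}\mathcal A|^2$ produced by the evolution of $|\widehat\nabla_g^{k-1}\mathcal A|^2$ dominates the creation term $|\widehat\nabla_g^{k}\mathcal A|^2$ coming from the time weight, yields $(\partial_s-\Delta)\Phi\leqslant C''$ and hence $|\widehat\nabla_g^{k}\mathcal A(\cdot,s_0)|^2\leqslant\beta_k C_{k-1}^2+C''$ for every $s_0\geqslant-\log T+1$, uniformly in $s_0$. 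Since $\Sigma$ is compact with smooth initial immersion, all derivatives of $\mathcal A$ are bounded on the remaining slab $[-\log T,-\log T+1]$; concatenating the two ranges gives the asserted uniform bound $|\widehat\nabla_g^{k}\mathcal A|\leqslant C_k$ on $\Sigma\times[-\log T,\infty)$.

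The main obstacle is the bookkeeping in the evolution equation for $\widehat\nabla_g^{k}\mathcal A$: commuting covariant derivatives in a curved ambient and through the drift produces numerous terms involving $\nabla_g^{\leqslant k}\operatorname{Rm}_g$ and $\nabla_g^{\leqslant k+2}f$, and one must verify that each such term is controlled by the \emph{fixed} constants of Definition~\ref{DefBGeom} and \eqref{Additional-BG}, with no residual dependence on $s$. The soliton identity \eqref{mod_grad_Ricci_soliton} is exactly what removes the a priori \emph{uncontrolled} Hessian $\nabla_g^2 f$ in favour of the controlled tensors $\operatorname{Ric}_g$ and $dw\otimes dw$, so that the whole hierarchy of reaction terms $P_k$ inherits uniform bounds.
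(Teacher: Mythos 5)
Your proposal is correct and follows the same skeleton as the paper's proof --- induction on $k$, the type-I hypothesis \eqref{singularity:type:I} giving the case $k=0$ through the scaling identity \eqref{(27)}, an evolution inequality for $|\widehat\nabla_g^k \mathcal A|^2$ with reaction controlled by $C(1+|\widehat\nabla_g^k\mathcal A|^2)$, absorption of the bad term via the good gradient term in the $(k-1)$-level inequality, and the parabolic maximum principle --- but it differs genuinely in two places, and the comparison is instructive. First, you derive the evolution equation directly for the drift flow $\partial_s\widetilde x_s=(\nabla_g f+H_g)(\widetilde x_s)$ in the \emph{fixed} background $(M,g)$, using the soliton identity \eqref{mod_grad_Ricci_soliton} at the initial slice, $\nabla_g^2 f=\tfrac{c}{2}g-\operatorname{Ric}_g+\alpha_n\,dw\otimes dw$, to bound every $\nabla_g^j f$ with $j\geqslant 2$ by Definition~\ref{DefBGeom} and \eqref{Additional-BG}; the paper instead computes in the unnormalized picture, where the flow is pure MCF in the evolving metric $\overline g(t)$ and the reaction tensors $\mathcal E[k],\mathcal C[k],\mathcal G[k]\in\mathcal V_{a,b}$ involve only $\overline{\operatorname{Rm}}$, $d\overline w\otimes d\overline w$ and $\overline{\mathcal A}$ (no derivative of $\overline f$ at all), and then transfers the bounds through the scaling identities \eqref{(27)}--\eqref{(28)*}. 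Your route is conceptually cleaner but rests on the one assertion you state without proof: that $\nabla_g f$ ``enters only as an advection term''. This is true but is precisely the delicate point --- a naive variation computation with normal speed $H_g+e_s(f)$ and tangential motion $(\nabla_g f)^{T}$ produces terms of the type $\mathcal A*\widehat\nabla\mathcal A*\nabla_g f$ and $\mathcal A*\mathcal A*e_s(f)$, and $|\nabla_g f|$ is \emph{not} uniformly bounded in the noncompact bounded-geometry setting (it grows linearly in the distance); these terms do cancel, because motion along the ambient field $\nabla_g f$ alters the invariants of an immersion only through $\mathcal L_{\nabla_g f}\,g=2\nabla_g^2 f$ plus reparametrization, which is exactly what the paper's unnormalized computation encodes automatically, so your argument is salvageable but must include this cancellation explicitly. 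Second, your maximum-principle step uses the Ecker--Huisken time weight $\Phi=(s-(s_0-1))|\widehat\nabla_g^k\mathcal A|^2+\beta_k|\widehat\nabla_g^{k-1}\mathcal A|^2$ on unit slabs, whereas the paper uses the global exponential weight $\mu=e^{\overline C_k s}\bigl(|\widehat\nabla_g^k\mathcal A|^2+2\overline C_k|\widehat\nabla_g^{k-1}\mathcal A|^2-\overline{\overline C}_k\bigr)$; both are valid, both ultimately use compactness of $\Sigma$ to control an initial time range, and both must absorb the linear $|\widehat\nabla_g^{k+1}\mathcal A|$ contribution --- which your bound $P_k\leqslant C(1+|\widehat\nabla_g^k\mathcal A|^2)$ silently presupposes --- by Young's inequality against the good term $-2|\widehat\nabla_g^{k+1}\mathcal A|^2$, as the paper does explicitly for the $\mathcal C[k]$ terms.
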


\begin{proof}
The proof follows a standard approach as in \cite[Sect.~7]{huisken1990asymptotic}, \cite[Sect.~7]{Bernhard_List} and \cite[Prop.~4.9]{yamamoto2020meancurvature}, which is by induction on $k \in\mathbb N$. First of all, since $\overline g = (T - t)\psi^*_t g$ and $\widetilde x_s = \psi_t \circ x_t$, we have
\begin{align}
|\widehat\nabla_g^k\mathcal A(\widetilde x_s)|_g &= (T - t)^{\frac{1}{2} + \frac{1}{2}k}|\widehat\nabla_{\overline g}^k \mathcal  A_{\overline g}(x_t)|_{\overline g}, \label{(27)}\\
|\nabla_g^{k}\mathrm{Rm}_g|_g&=(T-t)^{1+\frac{1}{2}k}|\nabla_{\overline g}^{k}\mathrm{Rm}_{\overline g}|_{\overline g},\label{(28)}\\
|\nabla_g^{k}\big(dw \otimes dw\big)|_g&=(T-t)^{1+\frac{1}{2}k}|\nabla_{\overline g}^{k}\big(d\overline w \otimes d\overline w\big)|_{\overline g},\label{(28)*}
\end{align}
where $\nabla_g,$ $\widehat\nabla_g,$ $\nabla_{\overline g}$
and $\widehat\nabla_{\overline g}$ are defined from the Levi–Civita connection on $(M, g),$ $(\Sigma,  \widetilde x^*_s g),$ $(M, \overline g)$ and $(\Sigma, x^*_t\overline g)$, respectively.
Thus, the degree of $\widehat\nabla_{\overline g}^k\mathcal A_{\overline g}$ is $\frac{1}{2} +\frac{1}{2}k$ and of $\nabla_{\overline g}^k \operatorname{Rm}(\overline{g})$ and $\nabla_{\overline g}^k (d \overline w \otimes d \overline w)$ are $1 + \frac{1}{2}k$. Also, we write $\mathcal A_{\overline g}(x_t)$ and $\mathcal A_g(\widetilde x_s)$ by $\overline{\mathcal A}$ and $\mathcal A,$ respectively, while $\operatorname{Rm}(\overline g)$ and $\operatorname{Rm}(g)$ by $\overline{\operatorname{Rm}}$ and $\operatorname{Rm},$ respectively. 

Now, for two tensors $\mathcal T_1$ and $\mathcal T_2$, we write $\mathcal T_1*\mathcal T_2$ to mean a tensor formed by a sum of terms, each one of them obtained by contracting some indices of the pair $\mathcal T_1$ and $\mathcal T_2$ by using $g, x^*g$ and its inverses. In particular, there is a property that 
\begin{equation*}
|\mathcal T_1 \ast \mathcal T_2| \leqslant C|\mathcal T_1||\mathcal T_2|   
\end{equation*}
for some positive constant $C$ that depends only on the algebraic structure of $\mathcal T_1*\mathcal T_2$.
For $a, b \in \mathbb Q$, we consider $V_{a,b}$ the set of all (time-dependent) tensors $\mathcal T$ on $M$ that can be expressed as
\begin{eqnarray*}
\mathcal T &=& (\nabla^{k_{1}}_{\overline g}\overline{\mathrm{Rm}}\ast\dots\ast\nabla^{k_{I}}_{\overline g}\overline{\mathrm{Rm}})\ast \big(\nabla_{\overline g}^{m_{1}}(\alpha_n d\overline w \otimes d\overline w)\ast\dots\\
&&\ast\nabla_{\overline g}^{m_{K}}(\alpha_n d\overline w \otimes d\overline w)\big)\ast(\widehat\nabla_{\overline g}^{\ell_{1}}\overline{\mathcal  A}\ast\cdots\ast\widehat\nabla_{\overline g}^{\ell_{J}}\overline{\mathcal A})\ast (\mathop{\ast}^{p}Dx)
\end{eqnarray*}
with $I, J, K, p, k_1, \ldots , k_I , m_1, \ldots, m_K,  \ell_1, \ldots, \ell_J \in \mathbb N$ satisfying 
\begin{align*}
\sum^I_{i=1}\left( 1 + \frac{1}{2}k_i\right)+ \sum^K_{k=1}\left( 1 + \frac{1}{2}m_k\right)+ \sum^J_{j=1}\left(\frac{1}{2}+\frac{1}{2}\ell_j\right)= a \  \mbox{and} \ \sum^J_{j=1} \ell_j \leqslant b,
\end{align*}
and we define a vector space $\mathcal V_{a,b}$ as the set of all tensors $\mathcal T$ on $M$ which can be expressed as $\mathcal T = a_1\mathcal T_1 + \cdots + a_r\mathcal T_r$ for some $r \in \mathbb N, a_1 \ldots a_r \in \mathbb R$ and $\mathcal T_1, \ldots , \mathcal T_r \in V_{a,b}$.

The first step for the induction follows from the singularity of the type-I assumption \eqref{singularity:type:I}. For a fixed $k \geqslant 1$, assume that there exist positive constants $C_0, C_1, \ldots , C_{k-1}$ such that 
\begin{equation*}
|\widehat\nabla_g^i\mathcal A| \leqslant C_i \ \mbox{on}\  \Sigma \times [-\log T,\infty)   
\end{equation*}
for $i = 0, 1, \ldots , k-1$. We consider the evolution equation of $|\widehat \nabla_g^k\mathcal A|^2$, and finally, we will prove the bound of $|\widehat\nabla_g^k\mathcal A|^2$ by the parabolic maximum principle. From~\eqref{(27)} we have $| \widehat\nabla_g^k\mathcal A|^2 = (T-t)^{k+1}|\widehat\nabla^k_{\overline g}\overline{\mathcal A}|^2,$ and since $\frac{\partial}{\partial s} = (T-t)\frac{\partial}{\partial t},$ we obtain
\begin{align*}
\frac{\partial}{\partial s}|\widehat \nabla_g^k \mathcal  A|^2 &= -(k + 1)|\widehat \nabla_g^k \mathcal A|^2+(T-t)^{k+2} \frac{\partial}{\partial t}|\widehat\nabla_{\overline g}^k \overline{\mathcal A}|^2\leqslant (T-t)^{k+2} \frac{\partial}{\partial t}|\widehat\nabla_{\overline g}^k\overline{\mathcal A}|^2.    
\end{align*}
As used in the proof of~\cite[Prop.~4.9]{yamamoto2020meancurvature}, there exist tensors $\mathcal E[k] \in \mathcal V_{\frac{3}{2} + \frac{1}{2}k,k}$, $\mathcal C[k] \in \mathcal V_{\frac{3}{2} + \frac{1}{2}k,k+1}$ and $\mathcal G[k] \in \mathcal V_{\frac{1}{2} + \frac{1}{2}k,k-1}$ so that
\[\frac{\partial}{\partial t}|\widehat\nabla_{\overline g}^k \overline{\mathcal A}|^2 = \widehat\Delta_{\overline g}|\widehat\nabla_{\overline g}^k \overline{\mathcal A}|^2 - 2|\widehat\nabla_{\overline g}^{k+1} \overline{\mathcal A}|^2 + \mathcal E[k] * \widehat\nabla_{\overline g}^k \overline{\mathcal A} + \mathcal C[k] * \mathcal G[k],\]
where $\widehat\Delta_{\overline g}$ is the Laplacian on $\big(\Sigma, x^*_t\overline g(t)\big)$. Setting $\widehat\Delta_g$ for the Laplacian on $\big(\Sigma, \widetilde x^*_sg\big)$, we get  $(T - t)\widehat\Delta_{\overline g}=\widehat \Delta_g$, and then from~\eqref{(27)} one has
\begin{equation*}
(T-t)^{k+2}(\widehat\Delta_{\overline g}|\widehat\nabla_{\overline g}^k \overline{\mathcal A}|^2-2|\widehat\nabla_{\overline g}^{k+1} \overline{\mathcal A}|^2)= \widehat \Delta_g |\widehat \nabla_g^k \mathcal A|^2-2|\widehat\nabla_g^{k+1} \mathcal A|^2.    
\end{equation*}
Since $\mathcal G[k] \in \mathcal V_{\frac{1}{2} + \frac{1}{2}k,k-1}$, there exist $r \in \mathbb N$,  $a_1, \ldots, a_r \in \mathbb R$ and 
$\mathcal G[k]_1, \ldots , \mathcal G[k]_r \in  V_{\frac{1}{2} + \frac{1}{2}k,k-1}$
such that
$\mathcal G[k] = \sum_{i = 1}^r a_i\mathcal G[k]_i.$ Thus, $|\mathcal G[k]| \leqslant \sum_{i = 1}^r |a_i||\mathcal G[k]_i|.$
By definition of $V_{\frac{1}{2} + \frac{1}{2}k,k-1}$, each $\mathcal G[k]_i$ can be expressed as
\begin{align*}
&(\nabla^{k_{1}}_{\overline g}\overline{\mathrm{Rm}}\ast\dots\ast\nabla^{k_{I}}_{\overline g}\overline{\mathrm{Rm}})\ast \big(\nabla_{\overline g}^{m_{1}}(\alpha_n d\overline w \otimes d\overline w)\ast\dots\ast\nabla_{\overline g}^{m_{K}}(\alpha_n d\overline w \otimes d\overline w)\big)\\
&\ast(\widehat\nabla_{\overline g}^{\ell_{1}}\overline{\mathcal  A}\ast\cdots\ast\widehat\nabla_{\overline g}^{\ell_{J}}\overline{\mathcal A})\ast (\mathop{\ast}^{p}Dx)    
\end{align*}
with $I, J, K, p, k_1, \ldots , k_I , m_1, \ldots, m_K, \ell_1, \ldots , \ell_J \in \mathbb N$ satisfying
\[\sum^I_{i=1}\left(1 +\frac{1}{2}k_i\right)+ \sum^K_{k=1}\left( 1 + \frac{1}{2}m_k\right)+\sum^J_{j=1}\left(\frac{1}{2}+\frac{1}{2}\ell_j\right)=\frac{1}{2}+\frac{1}{2}k \ \mbox{and} \  \sum^J_{j=1}\ell_j \leqslant k - 1.\]
Hence, by using \eqref{(27)}, \eqref{(28)}, \eqref{(28)*} and $|Dx| =\sqrt{n - 1},$ we obtain
\begin{align*}
(T - t)^{\frac{1}{2} + \frac{1}{2}k}|\mathcal G[k]_i| \leqslant & C(T - t)^{\frac{1}{2} + \frac{1}{2}k}|\nabla_{\overline g}^{k_1} \overline{\operatorname{Rm}}| \cdots |\nabla_{\overline g}^{k_I} \overline{\operatorname{Rm}}||\nabla_{\overline g}^{m_{1}}(d\overline w\otimes d\overline w)|\cdots\\
&|\nabla_{\overline g}^{m_{K}}(d\overline w\otimes d\overline w)| |\widehat\nabla_{\overline g}^{\ell_1} \overline{\mathcal A}| \cdots | \widehat\nabla_{\overline g}^{\ell_J} \overline{\mathcal A}||Dx|^p \\
=& C\big(\sqrt{ n - 1}\big)^p|\nabla_g^{k_1} \operatorname{Rm}| \cdots |\nabla_g^{k_I} \operatorname{Rm}||\nabla^{m_{1}}(dw\otimes dw)|\cdots\\
&|\nabla^{m_{K}}(dw\otimes dw)|\widehat\nabla_g^{\ell_1} \mathcal A| \cdots |\widehat\nabla_g^{\ell_J} \mathcal A| 
\end{align*}
for some constant $C:= C(\alpha_n)>0$. Since $(M, g)$ has bounded geometry and \eqref{Additional-BG} holds, each $|\nabla_g^{k_i} \operatorname{Rm}|$ and $|\nabla_g^{m_k}(dw \otimes dw)|$ are uniformly bounded, and for $\ell_j \leqslant k-1$, each $|\widehat\nabla_g^{\ell_j}\mathcal A|$ is uniformly bounded by assumption of induction. So, there exists a constant $C'>0$ such that
\begin{equation*}
 (T-t)^{\frac{1}{2}+\frac{1}{2}k}|\mathcal G[k]| \leqslant C'.
\end{equation*}
In the same way, since $\mathcal E[k] \in \mathcal V_{\frac{3}{2}+\frac{1}{2}k,k}$, there exist $r'\in\mathbb N$, $b_1, \ldots, b_{r'}\in\mathbb R$ and $\mathcal E[k]_1, \ldots, \mathcal E[k]_{r'} \in V_{\frac{3}{2}+\frac{1}{2}k,k}$ such that $\mathcal E[k] = \sum_{i = 1}^{r'}b_i\mathcal E[k]_i.$
Consequently, $|\mathcal E[k]| \leqslant \sum_{i = 1}^{r'}  |b_i|\left|\mathcal E[k]_i\right|.$
By definition of $V_{\frac{3}{2}+\frac{1}{2}k,k}$, each $\mathcal E[k]_i$ can be expressed as
\begin{align*}
&(\nabla^{k_{1}}_{\overline g}\overline{\mathrm{Rm}}\ast\dots\ast\nabla^{k_{I}}_{\overline g}\overline{\mathrm{Rm}})\ast \big(\nabla_{\overline g}^{m_{1}}(\alpha_n d\overline w \otimes d\overline w)\ast\dots\ast\nabla_{\overline g}^{m_{K}}(\alpha_n d\overline w \otimes d\overline w)\big)\\
&\ast(\widehat\nabla_{\overline g}^{\ell_{1}}\overline{\mathcal  A}\ast\cdots\ast\widehat\nabla_{\overline g}^{\ell_{J}}\overline{\mathcal A})\ast (\mathop{\ast}^{p}Dx)    
\end{align*}
with $I, J, p, k_1, \cdots , k_I , \ell_1, \cdots, \ell_J \in \mathbb{N}$ satisfying
\[\sum^I_{i=1}\left(1 +\frac{1}{2}k_i\right)+ \sum^K_{k=1}\left( 1 + \frac{1}{2}m_k\right) + \sum^J_{j=1}\left(\frac{1}{2}+\frac{1}{2}\ell_j\right)=\frac{3}{2}+\frac{1}{2}k \ \mbox{and} \  \sum^J_{j=1}\ell_j \leqslant k.\]
If $\max\{\ell_1, \ldots , \ell_J \} \leqslant k-1$, we can prove that $(T-t)^{\frac{3}{2}+\frac{1}{2}k}|\mathcal E[k]_i|$ is bounded by the same argument as the case of $\mathcal G[k]_i$. If $\max\{\ell_1, \ldots , \ell_J \} = k$, one can see that the possible forms of $\mathcal E[k]_i$ are
\begin{equation*}
\begin{aligned}
\overline{\mathcal A} * \overline{\mathcal A} * \widehat\nabla_{\overline g}^k \overline{\mathcal A} * (\overset{p}{*} Dx), \\
\overline{\operatorname{Rm}} * \widehat\nabla_{\overline g}^k \overline{\mathcal A} * (\overset{p}{*} Dx),\\
d \overline w \otimes d \overline w * \widehat\nabla_{\overline g}^k \overline{\mathcal A} * (\overset{p}{*} Dx).
\end{aligned}
\end{equation*}
For each term, we can see by the same argument as the case of $\mathcal G [k]_i$ that there exists a constant $\widetilde C > 0$ such that $(T-t)^{\frac{3}{2}+\frac{1}{2}k}|\mathcal E[k]_i|\leqslant \widetilde C| \widehat\nabla_g^k \mathcal A|$. Hence, there exists a constant $C''>0$ such that
\begin{equation*}
(T-t)^{\frac{3}{2} + \frac{1}{2}k}|\mathcal E[k]| \leqslant C''(1 + |\widehat\nabla_g^{k} \mathcal A|).
\end{equation*}
Since $\mathcal C[k] \in \mathcal V_{\frac{3}{2}+\frac{1}{2}k,k+1}$, there exist $r''\in \mathbb N$, $c_1, \ldots, c_r'' \in \mathbb R$ and 
$\mathcal C[k]_1, \ldots ,\mathcal C[k]_{r''} \in  V_{\frac{3}{2} + \frac{1}{2}k,k+1}$ such that $\mathcal C[k] = \sum_{i = 1}^{r''} c_i\mathcal C[k]_i.$ Thus, $|\mathcal C[k]| \leqslant \sum_{i = 1}^{r''}  |c_i||\mathcal C[k]_i|.$ By definition of $V_{\frac{3}{2} + \frac{1}{2}k,k+1},$ each $\mathcal C[k]_i$ can be expressed as
\begin{align*}
&(\nabla^{k_{1}}_{\overline g}\overline{\mathrm{Rm}}\ast\dots\ast\nabla^{k_{I}}_{\overline g}\overline{\mathrm{Rm}})\ast \big(\nabla_{\overline g}^{m_{1}}(\alpha_n d\overline w \otimes d\overline w)\ast\dots\ast\nabla_{\overline g}^{m_{K}}(\alpha_n d\overline w \otimes d\overline w)\big)\\
&\ast(\widehat\nabla_{\overline g}^{\ell_{1}}\overline{\mathcal  A}\ast\cdots\ast\widehat\nabla_{\overline g}^{\ell_{J}}\overline{\mathcal A})\ast (\mathop{\ast}^{p}Dx)    
\end{align*}
with $I, J, K, p, k_1, \ldots , k_I , m_1, \ldots, m_K, \ell_1, \ldots , \ell_J \in \mathbb N$ satisfying 
\begin{equation*}
\sum^I_{i=1}\Big(1+\frac{1}{2}k_i\Big)+ \sum^K_{k=1}\left( 1 + \frac{1}{2}m_k\right) +\sum^J_{j=1}\Big(\frac{1}{2}+\frac{1}{2}\ell_j\Big)=\frac{3}{2}+\frac{1}{2}k \ \mbox{and} \  \sum^J_{j=1} \ell_j \leqslant k+ 1.    
\end{equation*}
If $\max \{\ell_1, . . . , \ell_J \}\leqslant k -1$, we can prove that $(T - t)^{\frac{3}{2} + \frac{1}{2}k}|\mathcal C[k]_i|$ is bounded by the same argument as the case of $\mathcal G[k]_i$ If $\max\{\ell_1, \ldots , \ell_J \} = k$, one can see that the possible forms of $\mathcal C[k]_i$ are
\begin{equation*}
\begin{aligned}
\overline{\mathcal A} * \overline{\mathcal A} * \widehat\nabla_{\overline g}^k \overline{\mathcal A} * (\overset{p}{*} Dx), \\
\overline{\operatorname{Rm}} * \widehat\nabla_{\overline g}^k \overline{\mathcal A} * (\overset{p}{*} Dx), \\
d \overline w \otimes d \overline w * \widehat\nabla_{\overline g}^k \overline{\mathcal A} * (\overset{p}{*} Dx).
\end{aligned}
\end{equation*}
Also, we get $(T -t)^{\frac{3}{2} + \frac{1}{2}k}$
$|\mathcal C[k]_i| \leqslant \widetilde C|\widehat\nabla_g^k \mathcal A|$ as the case of $\mathcal E[k]_i$. If $\max\{\ell_1, \ldots , \ell_J \} =k + 1$, one can easily see that the possible form of $\mathcal C[k]_i$ is
\[\widehat\nabla_{\overline g}^{k+1}\overline{\mathcal A} * (\overset{p}{*} Dx),\]
and $(T - t)^{\frac{3}{2} + \frac{1}{2}k}|\mathcal 
 C[k]_i| \leqslant \widetilde{C}' |\widehat\nabla_g^{k+1}\mathcal A|$ for some constant $\widetilde{C}' > 0$. Then, we
can see that there exists a constant $C''' > 0$ such that
\[(T - t)^{\frac{3}{2} + \frac{1}{2}k}|\mathcal C[k]| \leqslant C'''(1 + |\widehat\nabla_g^k \mathcal A| + |\widehat\nabla_g^{k+1} \mathcal A|).\]   
Thus, we have
\begin{align*}
\frac{\partial}{\partial s} |\widehat\nabla_g^k \mathcal A|^2 \leqslant &(T - t)^{k+2} \frac{\partial}{\partial t}|\widehat\nabla_{\overline g}^k \overline{\mathcal A}|^2\\
\leqslant & \widehat\Delta_g  |\widehat\nabla_g^k \mathcal A|^2 - 2|\widehat\nabla_g^{k+1}\mathcal A|^2 + C''(1 + |\widehat\nabla_g^k\mathcal A|)|\widehat\nabla_g^k\mathcal A|\\
&+ C'C'''(1 + |\widehat\nabla_g^k\mathcal A| + |\widehat\nabla_g^{k+1}\mathcal A|).    
\end{align*}
Since $-|\widehat\nabla_g^{k+1} \mathcal A|^2 + C'C'''|\widehat\nabla_g^{k+1}\mathcal A| \leqslant (C'C''')^2/4,$ we get
\begin{align*}
\frac{\partial}{\partial s} |\widehat\nabla_g^k \mathcal A|^2 &\leqslant \widehat\Delta_g |\widehat\nabla_g^k \mathcal A|^2 - |\widehat\nabla_g^{k+1} \mathcal A|^2\\
&\quad+ C''|\widehat\nabla_g^k \mathcal A|^2 + (C'' + C'C''')|\widehat\nabla_g^k \mathcal A| + C'C''' +(C'C''')^2/4.    
\end{align*}
By putting $\overline C_k := C'' + (C'' + C'C''') + C'C''' +(C'C''')^2/4$, we obtain
\begin{align}\label{eq:30:Yam}
\frac{\partial}{\partial s} |\widehat\nabla_g^k \mathcal A|^2 \leqslant \widehat\Delta_g|\widehat\nabla_g^k \mathcal A|^2 - |\widehat\nabla_g^{k+1}\mathcal A|^2 + \overline C_k(1 + |\widehat\nabla_g^k \mathcal A|^2).    
\end{align}
So,
\begin{align}\label{eq:31:Yam}
\frac{\partial}{\partial s}|\widehat\nabla_g^k \mathcal A|^2 \leqslant \widehat\Delta_g|\widehat\nabla_g^k \mathcal A|^2 + \overline C_k(1 + |\widehat\nabla_g^k \mathcal A|^2 ).
\end{align}
We observe that inequality~\eqref{eq:30:Yam} also holds for $k - 1,$ that is, 
\begin{align}\label{eq:32:Yam}
\frac{\partial}{\partial s} |\widehat\nabla_g^{k-1}\mathcal A|^2 \leqslant \widehat\Delta_g|\widehat\nabla_g^{k-1} \mathcal A|^2 - |\widehat\nabla_g^k \mathcal A|^2 + \overline C_{k-1}(1 + |\widehat\nabla_g^{k-1} \mathcal A|^2),
\end{align}
for some constant $\overline C_{k-1} > 0.$ By combining inequalities~\eqref{eq:31:Yam} and~\eqref{eq:32:Yam}, one has
\begin{align*}
&\frac{\partial}{\partial s} (|\widehat\nabla_g^k \mathcal A|^2 + 2\overline C_k|\widehat\nabla_g^{k-1}\mathcal A|^2)\\
&\leqslant\widehat\Delta_g( |\widehat\nabla_g^k \mathcal A|^2 + 2\overline C_k|\widehat\nabla_g^{k-1} \mathcal A|^2) + \overline C_k - \overline C_k|\widehat\nabla_g^k \mathcal A|^2 + 2\overline C_k \overline C_{k-1}(1 + |\widehat\nabla_g^{k-1} \mathcal A|^2).  
\end{align*}
Since 
\begin{align*}
&\overline{C}_k - \overline{C}_k|\widehat\nabla_g^k\mathcal A|^2 + 2\overline C_k \overline C_{k-1}(1 + |\widehat\nabla_g^{k-1}\mathcal A|^2)\\
&= - \overline C_k(|\widehat\nabla_g^k \mathcal A|^2 + 2\overline C_k|\widehat\nabla_g^{k-1} \mathcal A|^2)  + \overline C_k(1 + 2\overline C_{k-1} + 2(\overline C_k + \overline C_{k-1})|\widehat\nabla_g^{k-1} \mathcal A|^2)    
\end{align*}
and $|\widehat\nabla_g^{k-1} \mathcal A|^2$ is bounded by assumption of induction, one can see that there exists a constant $\overline{\overline{C}}_k > 0$ such that
\begin{align*}
&\frac{\partial}{\partial s} (|\widehat\nabla_g^k \mathcal  A|^2 + 2\overline C_k|\widehat\nabla_g^{k-1}\mathcal A|^2 - \overline{\overline{C}}_k) \\
&\leqslant \widehat\Delta_g( |\widehat\nabla_g^k \mathcal A|^2+2\overline{C}_k|\widehat\nabla_g^{k-1} \mathcal A|^2 - \overline{\overline{C}}_k)- \overline{C}_k(|\widehat\nabla_g^k \mathcal A|^2 + 2\overline{C}_k|\widehat\nabla_g^{k-1} \mathcal A|^2 - \overline{\overline{C}}_k).    
\end{align*}
Thus, by putting $\mu := e^{\overline{C}_ks}(|\widehat\nabla_g^k \mathcal  A|^2 + 2\overline{C}_k| \widehat\nabla_g^{k-1} \mathcal  A|^2 - \overline{\overline{C}}_k),$ we get
$$\frac{\partial}{\partial s} \mu \leqslant \widehat\Delta_g \mu.$$
Since $\Sigma$ is compact, $\mu$ is bounded at the initial time $s = -\log T.$  Then, by the parabolic maximum principle, it follows that $\mu$ is also bounded on $\Sigma \times[-\log T,\infty)$, that is, there exists a constant $\widetilde C_k > 0$ such that $\mu \leqslant \widetilde C_k$ on $\Sigma \times [-\log T, \infty).$ So,
\begin{align*}
|\widehat\nabla_g^k \mathcal A|^2\leqslant e^{-\overline{C}_k s}\widetilde  C_k - 2\overline{C}_k|\widehat\nabla_g^{k-1}\mathcal A|^2 + \overline{\overline{C}}_k\leqslant C_k.    
\end{align*}
where $C_k := T^{\overline C_k} \widetilde C_k +  \overline{\overline{C}}_k.$
\end{proof}

The proof of Theorem~\ref{application:Huisken} also relies on the following lemmas. Before proving these lemmas, we observe that $S:= R - \alpha_n |\nabla w|^2$ is nonnegative along the gradient shrinking extended Ricci soliton on $M$. For it,  one uses $\frac{\partial S}{\partial t} = \Delta S + 2|\operatorname{Ric} - \alpha_n dw \otimes dw|^2 +2 \alpha_n (\Delta w)^2$ (see~\cite[Lem.~3.2]{Bernhard_List}) and $|T|^2 \geqslant \frac{(\operatorname{tr} T)^2}{n}$ for $T=\operatorname{Ric} - \alpha_n dw \otimes dw$ to obtain
\begin{equation*}
\frac{\partial S}{\partial t} \geqslant \Delta S + \frac{2}{n} S^2. 
\end{equation*}
Now, from maximum principle and since $t \in [-\infty, T)$ we can prove the required result on $S$ (see~\cite[Lem.~2.18]{chow2006hamilton}, for further details). Besides,  we can assume 
\begin{equation}\label{like Hamilton Identity}
S_{\overline g} + |\nabla \overline f|^2 - \frac{\overline f}{T - t}  = 0
\end{equation}
along the gradient extended Ricci soliton on $M$. For it, we proceed as in the proof of Hamilton's equation (see Cao~\cite{HDCao} or Hamilton~\cite{Hamilton}) to show that $S_g+|\nabla f|^2-f= C$, for some constant $C$. Indeed, from \eqref{mod_grad_Ricci_soliton} at $t = T  - 1$ and well known facts in the literature, we obtain
\begin{eqnarray*}
0&=&\operatorname{div}_g \operatorname{Ric}_g+ \operatorname{div}_g(\nabla d f) - \alpha_n \operatorname{div}_g(d w \otimes d w)\\
&=&\frac{dR_g}{2} + d\Delta_g f + \operatorname{Ric}_g(\nabla f, \cdot)  - \alpha_n(\Delta_g w d w + \frac{1}{2}d|\nabla w|^2) \\
&=&\frac{dR_g}{2} +  d\Delta_g f + \operatorname{Ric}_g(\nabla f, \cdot)  - \alpha_n g(\nabla w, \nabla f)d  w - \frac{\alpha_n}{2}d|\nabla w|^2 \\
&=&\frac{dR_g}{2} +  d\left(\frac{n}{2} - R_g + \alpha_n |\nabla  w|^2\right) + \frac{1}{2}d f -\frac{1}{2} d|\nabla f|^2 \\
&&+ \alpha_n  g( \nabla w, \nabla  f) d w - \alpha_n g(\nabla  w, \nabla f)d  w - \frac{\alpha_n}{2}d|\nabla w|^2 \\
&=&-\frac{1}{2}dR_g +\frac{1}{2} \alpha_n d|\nabla w|^2 + \frac{d f}{2} -\frac{1}{2} d |\nabla  f|^2,
\end{eqnarray*}
whence $S_g+|\nabla f|^2-f= C$, for some constant $C$. Thus, we can assume, adding $C$ to $f$ (if necessary), $S_g+|\nabla f|^2-f=0$. Since $\bar g(t)=\sigma(t)\psi_t^*g$, where $\sigma(t)=T-t$, then $S_{\psi_t^*g}+|\nabla_{\psi_t^*g} \bar f|^2-\bar f=0$. By conformal theory, $R_{\bar{g}}=R_{\psi_t^*g}/\sigma(t)$ and $|\nabla_{\bar g} \bar w|^2=|\nabla_{\psi_t^*g} \bar w|^2/\sigma(t)$ (the same to $\bar f$), and then we obtain \eqref{like Hamilton Identity}.

The following lemma gives the variation of the weighted area functional on the normalized MCF.

\begin{lemma}\label{Prop:4.6}
Assume that $(M,g)$ is an $n(\geqslant 3)-$dimensional Riemannian manifold, and let $(\overline{g}(t),\overline{w}(t))$ be a shrinking self-similar solution to the extended Ricci flow on $M$ with potential function $\overline{f}$ and initial value $(g,w)$. Given an $(n-1)-$dimensional compact smooth manifold $\Sigma$ without boundary, and let $\mathscr F$ be the MCF of $\Sigma$ in a gradient shrinking extended Ricci soliton background. Consider the  normalized MCF $\widetilde{\mathscr F}$ in $(M,g)$. Then
\begin{equation*}
\frac{d}{ds}\int_\Sigma e^{-f \circ \widetilde x_s}dA_{\widetilde x_s^*g} = -\int_\Sigma \big( H_g(\widetilde x_s) + e_s  (f \circ \widetilde x_s) \big)^2  e^{-f \circ \widetilde x_s}dA_{\widetilde x_s^*g}, \  s \in [-\log T , \infty).
\end{equation*}
\end{lemma}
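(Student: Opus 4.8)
The plan is to deduce this directly from the Huisken monotonicity-type formula of Theorem~\ref{Huisken_monotonicity} by performing the change of variables introduced in Remark~\ref{Setting-MCF}, rather than computing a first variation of the weighted area from scratch. The starting observation is that the normalized weighted area agrees, up to a universal constant, with the quantity $\mathscr A(t)$. Indeed, using $\overline g(t)=(T-t)\psi_t^*g$ together with $\widetilde x_s=\psi_t\circ x_t$, one gets $x_t^*\overline g(t)=(T-t)\,\widetilde x_s^*g$, so that on the $(n-1)$-dimensional $\Sigma$ the induced Riemannian measures satisfy $dA_{\overline g}=(T-t)^{(n-1)/2}dA_{\widetilde x_s^*g}$. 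Moreover $\overline f=\psi_t^*f$ yields $\overline f\circ x_t=f\circ\widetilde x_s$, whence $e^{-\overline f}=e^{-f\circ\widetilde x_s}$ as functions on $\Sigma$. Combining these two facts, the prefactor $[4\pi(T-t)]^{-(n-1)/2}$ exactly absorbs the Jacobian $(T-t)^{(n-1)/2}$, and I obtain
\[
\mathscr A(t)=(4\pi)^{-(n-1)/2}\int_\Sigma e^{-f\circ\widetilde x_s}\,dA_{\widetilde x_s^*g},
\]
i.e. $\int_\Sigma e^{-f\circ\widetilde x_s}\,dA_{\widetilde x_s^*g}=(4\pi)^{(n-1)/2}\mathscr A(t)$.

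With this identification in hand, I differentiate in $s$ using the chain rule and $\frac{dt}{ds}=T-t$ from Remark~\ref{Setting-MCF}, so that
\[
\frac{d}{ds}\int_\Sigma e^{-f\circ\widetilde x_s}\,dA_{\widetilde x_s^*g}
=(4\pi)^{(n-1)/2}(T-t)\,\frac{d}{dt}\mathscr A(t).
\]
Then I substitute the right-hand side of Theorem~\ref{Huisken_monotonicity} and convert every factor there into its normalized counterpart. The measure and the weight transform as above, while the soliton quantity must be rescaled by means of the relations $H_{\overline g}=(T-t)^{-1/2}H_{\psi_t^*g}$ and $e_t=(T-t)^{-1/2}e_{\psi_t^*g}$ recorded in Remark~\ref{Setting-MCF}. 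Since $\psi_t$ is an isometry from $(M,\psi_t^*g)$ onto $(M,g)$, it preserves the mean curvature and carries the unit normal of $x_t(\Sigma)$ to the unit normal $e_s$ of $\widetilde\Sigma_s$; together with $\overline f=f\circ\psi_t$ and the chain rule this gives $H_{\psi_t^*g}=H_g(\widetilde x_s)$ and $e_{\psi_t^*g}(\overline f)=e_s(f\circ\widetilde x_s)$, so that
\[
\big(H_{\overline g}+e_t\overline f\big)^2=(T-t)^{-1}\big(H_g(\widetilde x_s)+e_s(f\circ\widetilde x_s)\big)^2.
\]

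Feeding these transformations into the formula of Theorem~\ref{Huisken_monotonicity}, the powers of $(T-t)$ cancel cleanly: the two factors $(T-t)^{\pm(n-1)/2}$ coming from $[4\pi(T-t)]^{-(n-1)/2}$ and from $dA_{\overline g}$ annihilate each other, leaving the single factor $(T-t)^{-1}$ produced by the squared soliton quantity, which is in turn compensated by the $(T-t)$ coming from $\frac{dt}{ds}$, while the constants $(4\pi)^{\pm(n-1)/2}$ cancel as well. What remains is precisely the asserted identity. The only genuinely delicate point, and the step I would verify most carefully, is the rescaling of the soliton quantity $H_{\overline g}+e_t\overline f$, where one must correctly combine the homothety scaling $\overline g=(T-t)\psi_t^*g$ (responsible for the $(T-t)^{-1/2}$ factors) with the diffeomorphism invariance under $\psi_t$ (responsible for identifying $H_{\psi_t^*g}$ and $e_{\psi_t^*g}(\overline f)$ with their normalized versions $H_g(\widetilde x_s)$ and $e_s(f\circ\widetilde x_s)$); everything else is bookkeeping of scaling factors.
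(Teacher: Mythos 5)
Your proposal is correct and follows essentially the same route as the paper: the paper's proof likewise deduces the lemma from Theorem~\ref{Huisken_monotonicity} via the two identities $[4\pi(T-t)]^{-(n-1)/2}e^{-\overline f}\,dA_{\overline g}=(4\pi)^{-(n-1)/2}e^{-f\circ\widetilde x_s}\,dA_{\widetilde x_s^*g}$ and $(T-t)\big(H_{\overline g}+e_t\overline f\big)^2=\big(H_g(\widetilde x_s)+e_s(f\circ\widetilde x_s)\big)^2$, combined with the chain rule $\frac{dt}{ds}=T-t$. Your more detailed justification of the second identity (splitting it into the homothety scaling and the isometry invariance under $\psi_t$) is exactly the verification the paper leaves implicit.
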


\begin{proof}
As $\overline f\circ x_t =  f \circ \widetilde x_s$ and $x^*_t \overline g= (T-t)  (\psi_t \circ x_t)^* g = (T-t)\widetilde x^*_s g$ both on $\Sigma$, we have
\begin{align}
[4\pi(T - t)]^{-\frac{n - 1}{2}} e^{-\overline f} dA_{\overline g}&=(4\pi)^{-\frac{n -1}{2}} e^{-f\circ \widetilde x_s} dA_{\widetilde x_s^*g},\label{xa:era:01}\\
(T - t)\left(H_{\overline g}+ e_t \overline  f \right)^2 &= \left(H_g(\widetilde x_s) + e_s (f \circ \widetilde x_s) \right)^2.\label{xa:era:02} 
\end{align}
The result of the lemma follows from Theorem~\ref{Huisken_monotonicity}, identities \eqref{xa:era:01} and \eqref{xa:era:02} together with the chain rule.
\end{proof}

\begin{lemma}\label{prop:4.7}
Assume that $(M,g)$ is an $n(\geqslant 3)-$dimensional Riemannian manifold with bounded geometry, and let $(\overline{g}(t),\overline{w}(t))$ be a shrinking self-similar solution to the extended Ricci flow satisfying \eqref{Additional-BG} on $M$ with potential function $\overline{f}$ and initial value $(g,w)$. Given an $(n-1)-$dimensional compact smooth manifold $\Sigma$ without boundary, and let $\mathscr F$ be the MCF of $\Sigma$ in a gradient shrinking extended Ricci soliton background which develops a singularity of type-I. Consider the  normalized MCF $\widetilde{\mathscr F}$ in $(M,g)$. Then, there exists a positive constant $C$ such that
\begin{equation*}
\int_\Sigma e^{-\frac{f}{2} \circ \widetilde x_s} dA_{\widetilde x^*_s g}\leqslant C   
\end{equation*}
uniformly on $[-\log T, \infty).$
\end{lemma}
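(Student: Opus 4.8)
The plan is to reduce the statement to a differential inequality for
\[
\mathcal B(s):=\int_\Sigma e^{-\frac{f}{2}\circ\widetilde x_s}\,dA_{\widetilde x_s^*g},\qquad s\in[-\log T,\infty),
\]
and then to close it using the monotone quantity of Lemma~\ref{Prop:4.6}. In what follows $f$ abbreviates $f\circ\widetilde x_s$ on $\Sigma$, every area element is $dA_{\widetilde x_s^*g}$, and I will freely use the soliton data on $(M,g)$: the trace identity $\Delta_g f=\frac n2-S_g$, the Hamilton-type identity $|\nabla_g f|^2=f-S_g$ (the $g$-form of \eqref{like Hamilton Identity}), the Hessian identity $\nabla_g^2 f=\frac12 g-\operatorname{Ric}_g+\alpha_n\,dw\otimes dw$, together with $S_g\ge0$ and hence $f\ge0$. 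The type-I hypothesis ensures that $\widetilde{\mathscr F}$ is a smooth family of immersions on the entire ray $[-\log T,\infty)$, so the computations below are legitimate. Writing $V=\nabla_g f+H_g e_s$ for the velocity of $\widetilde{\mathscr F}$ (Remark~\ref{Setting-MCF}), so that $\langle V,e_s\rangle=H_g+e_s(f)$, the first-variation identity used to prove Lemma~\ref{Prop:4.6}, now carried out with weight $e^{-f/2}$, gives
\[
\frac{d}{ds}\mathcal B(s)=-\int_\Sigma e^{-\frac f2}\Big(H_g+\tfrac12 e_s(f)\Big)\big(H_g+e_s(f)\big)\,dA_{\widetilde x_s^*g}.
\]
Unlike the case of weight $e^{-f}$, the integrand is no longer a perfect square, monotonicity is lost, and a coercive term must be produced by hand.

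The core of the argument is to rewrite this derivative. I would expand the integrand as $H_g^2+\tfrac32 H_g e_s(f)+\tfrac12 e_s(f)^2$ and eliminate the cross term through the intrinsic–ambient Laplacian relation $\widehat\Delta_g f=\Delta_g f-\nabla_g^2 f(e_s,e_s)+H_g e_s(f)$, integrating $\int_\Sigma e^{-f/2}\widehat\Delta_g f$ by parts on the closed manifold $\Sigma$ and then substituting the three soliton identities above together with $|\widehat\nabla_g f|^2=|\nabla_g f|^2-e_s(f)^2$. The terms proportional to $\int e^{-f/2}|\nabla_g f|^2$ recombine, and after using $|\nabla_g f|^2=f-S_g$ a coercive term $-\tfrac12\int e^{-f/2}f$ survives (the bound $e_s(f)^2\le|\nabla_g f|^2\le f$ controlling the leftover first-order piece); the genuinely nonpositive terms $-\int e^{-f/2}H_g^2$ and $-\tfrac34\int e^{-f/2}S_g$ are simply dropped, and the curvature/potential contributions are bounded because bounded geometry and \eqref{Additional-BG} give $|\nabla_g^2 f(e_s,e_s)|=\big|\tfrac12-\operatorname{Ric}_g(e_s,e_s)+\alpha_n(e_s w)^2\big|\le K$. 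This leaves
\[
\frac{d}{ds}\mathcal B(s)\le-\tfrac12\int_\Sigma e^{-\frac f2}f\,dA_{\widetilde x_s^*g}+C_1\,\mathcal B(s),
\]
with $C_1$ depending only on $n$, the curvature bound in Definition~\ref{DefBGeom}, and the constant in \eqref{Additional-BG}.

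To finish, the coercive term must be made to dominate $C_1\mathcal B$, which a plain Grönwall estimate cannot do. For $R>0$ I would split $\Sigma=\{f\le R\}\cup\{f>R\}$. On $\{f\le R\}$ the monotonicity of Lemma~\ref{Prop:4.6} bounds $\int_\Sigma e^{-f}\le C_0$, the value of that integral at $s=-\log T$, whence the area of the sublevel set is controlled, $\mathrm{Area}\big(\Sigma\cap\{f\le R\}\big)\le e^{R}\int_\Sigma e^{-f}\le e^{R}C_0$; on $\{f>R\}$ one has $e^{-f/2}\le R^{-1}e^{-f/2}f$. Adding the two pieces gives $\mathcal B(s)\le e^{R}C_0+R^{-1}\int_\Sigma e^{-f/2}f$, that is, $\int_\Sigma e^{-f/2}f\ge R\big(\mathcal B(s)-e^{R}C_0\big)$. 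Inserting this into the differential inequality and choosing $R>2C_1$ produces $\frac{d}{ds}\mathcal B\le-\delta\mathcal B+C_2$ with $\delta=\tfrac R2-C_1>0$ and $C_2=\tfrac R2 e^{R}C_0$. The integrating factor $e^{\delta s}$ then yields $\mathcal B(s)\le\mathcal B(-\log T)+C_2/\delta$ uniformly on $[-\log T,\infty)$, and $\mathcal B(-\log T)<\infty$ because $\Sigma$ is compact.

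The principal obstacle is the middle step: turning the sign-indefinite first variation into the coercive form $-\tfrac12\int e^{-f/2}f+C_1\mathcal B$, which depends crucially on the soliton identities for $\Delta_g f$, $|\nabla_g f|^2$ and $\nabla_g^2 f$ and on the signs $S_g\ge0$, $f\ge0$. The second delicate point is the absorption of $C_1\mathcal B$: rather than applying Grönwall (which would only give exponential growth), one leverages the $L^1$ bound on $e^{-f}$ already supplied by the monotonicity formula to control the area of a fixed sublevel set, thereby converting the coercive term into $-\delta\mathcal B$. Bounded geometry and \eqref{Additional-BG} are needed only to guarantee that $C_1$ is finite.
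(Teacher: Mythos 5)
Your proof is correct, and it reaches the paper's key differential inequality by a genuinely different computation, so it is worth comparing the two. The paper stays in the unnormalized picture $(x_t,\overline g(t))$: it specializes the general weighted variation formula of Prop.~3.2 in \cite{yamamoto2020meancurvature} to $\bar\rho_t=[4\pi(T-t)]^{-n/2}e^{-\overline f/2}$ and $u_t=[4\pi(T-t)]^{1/2}$, which isolates a perfect square $-\big(H_{\overline g}+\tfrac12 e_t\overline f\big)^2$ (discarded) plus two conjugate-heat-operator-type terms; those are then estimated with the identities $\Delta_{\overline g}\overline f=-S_{\overline g}+\tfrac{n}{2(T-t)}$, $|\nabla\overline f|^2=\tfrac{\overline f}{T-t}-S_{\overline g}$, the Hessian soliton equation, $S_{\overline g}\geqslant 0$, bounded geometry and \eqref{Additional-BG}, yielding $\frac{d}{ds}\mathcal B<\tfrac14\int_\Sigma (C_0-f)e^{-f/2}\,dA_{\widetilde x_s^*g}$. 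You instead work entirely in the normalized picture: a bare-hands first variation with velocity $\nabla_g f+H_g e_s$ (note your integrand $(H_g+\tfrac12 e_s f)(H_g+e_s f)$ is \emph{not} a square, unlike the paper's), elimination of the cross term through $\widehat\Delta_g f=\Delta_g f-\nabla^2_g f(e_s,e_s)+H_g e_s(f)$ and integration by parts on the closed $\Sigma$, followed by the same soliton identities; I checked the bookkeeping, and your coefficients (coercive term $-\tfrac12\int fe^{-f/2}$, dropped term $-\tfrac34\int S_g e^{-f/2}$) do come out as claimed, so both routes land on an inequality of the identical shape ``coercive $f$-term plus $C\cdot\mathcal B$.'' Your derivation buys self-containedness --- no appeal to an external variation formula --- at the cost of redoing the variational computation; the paper's buys brevity and keeps the perfect-square structure visible. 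As for the endgame, the paper compresses it into the single sentence that the lemma ``follows from the analysis of the sign''; your argument --- splitting $\{f\leqslant R\}\cup\{f>R\}$, bounding the sublevel-set contribution by $e^{R}$ times the monotone quantity of Lemma~\ref{Prop:4.6}, and closing with an ODE comparison --- is exactly the analysis that sentence must stand for, since the differential inequality by itself (without the uniform bound on $\int_\Sigma e^{-f}$) gives only exponential growth of $\mathcal B$; there you have made explicit what the paper leaves implicit rather than diverged from it.
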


\begin{proof}
We start by substituting $\bar{\rho}_{t}:=\frac{1}{[4\pi(T-t)]^{\frac{n}{2}}}e^{-\frac{\overline f}{2}}$, $u_{t}:=[4\pi(T-t)]^{1/2},$ $h:=-2\operatorname{Ric}_{\overline g}+2\alpha_n d \overline w \otimes \overline w$ and $V:= H_{\overline g}e_t$ into Prop.~3.2 of \cite{yamamoto2020meancurvature} to obtain 
\begin{eqnarray*}
&&\frac{d}{dt} \int_{\Sigma}u_{t}\mathop{x_{t}^{*}\hspace{-0.5mm}\bar{\rho}_{t}} dA_{\overline g}\\
&=&-\int_{\Sigma}u_{t}\Bigl(H_{\overline g}+\frac{1}{2} e_t\overline f\Bigr)^2\mathop{x_{t}^{*}\hspace{-0.5mm}\bar{\rho}_{t}} dA_{\overline g}\\
&&+\int_{\Sigma}u_{t} x_{t}^{*}\left(\Delta_{\overline g}\overline \rho_{t}+\frac{\partial \overline \rho_t}{\partial t}-\overline \rho_tS_{\overline g}\right)  dA_{\overline g}\\
&& +\int_{\Sigma}\left( \frac{\partial u_{t}}{\partial t} -\widehat\Delta_{\overline g}u_{t}+
u_{t}\left(\frac{1}{2}\mathrm{Hess}_{\overline g}\overline f-\frac{h}{2}\right)(e_{t},e_{t})\right)\mathop{x_{t}^{*}\hspace{-0.5mm}\bar{\rho}_{t}} dA_{\overline g}. 
\end{eqnarray*}
By using $\Delta_{\overline g} \overline f=-S_{\overline g}+\frac{n}{2(T-t)}$ (see \eqref{mod_grad_Ricci_soliton}), $|\nabla \overline f|^{2}=\frac{\overline f}{T-t}-S_{\overline g}$ (see \eqref{like Hamilton Identity}), $ S_{\overline g} \geqslant 0$ and \eqref{self-solution}, we obtain 
\begin{equation*}
\Delta_{\overline g}\overline \rho_{t}+\frac{\partial \overline \rho_t}{\partial t}-\overline\rho_tS_{\overline g}
={\overline\rho_{t}}\biggl(-\frac{\overline f}{4(T-t)}-\frac{S_{\overline g}}{4}+\frac{n}{4(T-t)} \biggr)\leqslant \frac{\overline \rho_t}{4(T-t)}(n-f).
\end{equation*}
Furthermore, since $u$ satisfies 
\begin{equation*}
\frac{\partial u_{t}}{\partial t} -\widehat\Delta_{\overline g}u_{t}+u_{t}\left(\mathrm{Hess}_{\overline g}\overline f-\frac{h}{2}\right)(e_{t},e_{t})=0,
\end{equation*}
we get
\begin{equation*}
\frac{\partial u_{t}}{\partial t} -\widehat\Delta_{\overline g}u_{t}+u_{t}\left(\frac{1}{2}\mathrm{Hess}_{\overline g}\overline f-\frac{h}{2}\right)(e_{t},e_{t})=-\frac{1}{2}u_{t}\mathrm{Hess}_{\overline g}\overline f(e_{t},e_{t}).
\end{equation*}
Equation $\mathrm{Hess}_{\overline g}\overline f=\frac{1}{2(T-t)}\overline g-\mathrm{Ric}_{\overline g} + \alpha_n d \overline w \otimes d\overline w$ implies 
\begin{equation*}
-\frac{1}{2}u_{t}\mathrm{Hess}_{\overline g}\overline f(e_{t},e_{t})=u_{t}\left(-\frac{1}{4(T-t)}+\frac{1}{2}(\mathrm{Ric}_{\overline g} - \alpha_n d\overline w \otimes d \overline w)(e_{t},e_{t})\right).
\end{equation*}
On the other hand, by assumption $dw \otimes dw$ is bounded (take $j=0$ in \eqref{Additional-BG}) and since $(M,g)$ has bounded geometry, we have that $C'':=\max_{M}\big\{|\mathrm{Ric}_g|_g + \alpha_n |dw \otimes dw|_g\big\}$ is a constant, then 
\begin{eqnarray*}
&&(\mathrm{Ric}_{\overline g}-\alpha_n d\overline w \otimes d\overline w)(e_{t},e_{t})\\
&\leqslant& |\mathrm{Ric}_{\overline g} -\alpha_n d\overline w \otimes d\overline w|_{\overline g}\leqslant\frac{|\mathrm{Ric}_g|_g + \alpha_n |dw \otimes dw|_g}{T-t}\leqslant \frac{C''}{T-t}.    
\end{eqnarray*}
Hence,
\begin{equation*}
\frac{d}{dt} \int_{\Sigma}u_{t}\mathop{x_{t}^{*}\hspace{-0.5mm}\bar{\rho}_{t}} dA_{\overline g}
< \frac{1}{4(T-t)}\int_{\Sigma}\Bigl(C_{0}-\overline f\circ x_{t}\Bigr)u_{t} \mathop{x_{t}^{*}\hspace{-0.5mm}\bar{\rho}_{t}} dA_{\overline g}, 
\end{equation*}
where $C_{0}:=n+2C''$. Since $s=-\log(T-t),$ we have
\begin{eqnarray*}
\frac{d}{ds}\int_\Sigma e^{-\frac{f}{2}\circ\widetilde x_s}dA_{\widetilde x_s^*g}
&=&(4\pi)^{\frac{n-1}{2}}(T-t)\frac{d}{dt}\int_\Sigma u_{t}\mathop{x_{t}^{*}\hspace{-0.5mm}\bar{\rho}_{t}} dA_{\overline g}\\
&<&\frac{1}{4}\int_\Sigma \Bigl(C_{0}-f\circ \widetilde x_{s}\Bigr)e^{-\frac{f}{2}\circ\widetilde x_s}dA_{\widetilde x_s^*g}. 
\end{eqnarray*}
The result of the lemma follows from the analysis of the sign on the previous inequality.
\end{proof}

\begin{lemma}\label{Lem:4.10}
Assume that $(M,g)$ is an $n(\geqslant 3)-$dimensional Riemannian manifold with bounded geometry, and let $(\overline{g}(t),\overline{w}(t))$ be a shrinking self-similar solution to the extended Ricci flow satisfying \eqref{Additional-BG} on $M$ with potential function $\overline{f}$ and initial value $(g,w)$. Given an $(n-1)-$dimensional compact smooth manifold $\Sigma$ without boundary, and let $\mathscr F$ be the MCF of $\Sigma$ in a gradient shrinking extended Ricci soliton background which develops a singularity of type-I. Consider the  normalized MCF  $\widetilde{\mathscr F}$ in $(M,g)$. Then, there exists a constant $C' > 0$ such that
\begin{equation*}
\left|\frac{d^2}{d^2s}\int_\Sigma e^{-f\circ \widetilde x_s}dA_{\widetilde x^*_s g}\right|  = \left|\frac{d}{ds}\int_\Sigma \big(H_g(\widetilde x_s) + e_s  (f \circ \widetilde x_s) \big)^2e^{-f\circ \widetilde x_s}dA_{\widetilde x_s^*g}\right|\leqslant C'   
\end{equation*}
uniformly on $[-\log T, \infty).$ 
\end{lemma}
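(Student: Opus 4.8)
The first equality is nothing but Lemma~\ref{Prop:4.6}: differentiating the weighted area once in $s$ produces $-\int_\Sigma\big(H_g(\widetilde x_s)+e_s(f\circ\widetilde x_s)\big)^2 e^{-f\circ\widetilde x_s}\,dA_{\widetilde x_s^*g}$, so the second $s$-derivative of the weighted area is, up to sign, the $s$-derivative of
\[
I(s):=\int_\Sigma \big(H_g(\widetilde x_s)+e_s(f\circ\widetilde x_s)\big)^2 e^{-f\circ\widetilde x_s}\,dA_{\widetilde x_s^*g}.
\]
Writing $\mathscr H:=H_g(\widetilde x_s)+e_s(f\circ\widetilde x_s)$ for the scalar soliton defect, the whole problem reduces to bounding $\big|\tfrac{d}{ds}I(s)\big|$ uniformly on $[-\log T,\infty)$. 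The plan is to differentiate $I(s)$ under the integral sign using the normalized flow equation $\partial_s\widetilde x_s=(\nabla_g f+H_g)(\widetilde x_s)$ of Remark~\ref{Setting-MCF}, and then to estimate each resulting term by combining the uniform higher-order curvature bounds of Proposition~\ref{Underthesamesatupofproposition01} with the integral estimate of Lemma~\ref{prop:4.7}.

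First I would differentiate the three $s$-dependent pieces of the integrand separately: the factor $\mathscr H^2$, the weight $e^{-f\circ\widetilde x_s}$, and the induced measure $dA_{\widetilde x_s^*g}$. The last two are standard first-variation computations for a flow with velocity $\nabla_g f+H_g$: the measure evolves by a factor built from $\mathcal A$, $H_g$ and the tangential part of $\nabla_g f$, while $\partial_s(f\circ\widetilde x_s)=\langle\nabla_g f,\nabla_g f+H_g\rangle=|\nabla_g f|^2+H_g\,e_s(f\circ\widetilde x_s)$. For the factor $\mathscr H^2$ I would use the evolution equation of $\mathscr H$ under $\widetilde{\mathscr F}$; exactly as in the Euclidean and Ricci-flow backgrounds, this is a drift–diffusion equation of the schematic form $\partial_s\mathscr H=\widehat\Delta_g\mathscr H+\mathscr H*(\cdots)$ whose lower-order coefficients are polynomial expressions in $\mathcal A$, $\operatorname{Rm}_g$, $\nabla_g^2 f$ and $dw\otimes dw$ together with their first covariant derivatives. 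Integrating the $\widehat\Delta_g\mathscr H$ contribution by parts against $2\mathscr H\,e^{-f\circ\widetilde x_s}$ over the closed manifold $\Sigma$ produces a gradient term $-2\int_\Sigma|\widehat\nabla_g\mathscr H|^2 e^{-f}\,dA$ plus a term in which the derivative falls on the weight, contributing a factor $\langle\nabla_g f,\widehat\nabla_g\mathscr H\rangle$.

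Next I would bound every term pointwise. The quantities $H_g$, $\widehat\nabla_g H_g$ and the curvature coefficients are uniformly bounded via Proposition~\ref{Underthesamesatupofproposition01}, bounded geometry (Definition~\ref{DefBGeom}) and \eqref{Additional-BG}; the derivatives of the potential are controlled through the soliton identities $\nabla_g^2 f=\tfrac12 g-\operatorname{Ric}_g+\alpha_n\,dw\otimes dw$ from \eqref{mod_grad_Ricci_soliton} and $|\nabla_g f|^2=f-S_g$ from \eqref{like Hamilton Identity}, the latter also showing that $f=|\nabla_g f|^2+S_g\geqslant0$. Tracking the worst factors, $\mathscr H$ carries one power of $|\nabla_g f|$, $\widehat\nabla_g\mathscr H$ another (through the Weingarten term $\langle\nabla_g f,-\mathcal A(\partial_i,\cdot)^\sharp\rangle$), and $\partial_s(f\circ\widetilde x_s)$ carries $|\nabla_g f|^2$, so that after differentiation the integrand is of the schematic form $P(f)\,e^{-f\circ\widetilde x_s}$ with $P$ a fixed polynomial and all remaining factors uniformly bounded.

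The main obstacle, and the only step demanding genuine care, is precisely this possible growth of $f$ (hence of $|\nabla_g f|$) along $\widetilde x_s(\Sigma)$ when $M$ is noncompact, which prevents a naive pointwise bound on the integrand. I would overcome it by absorbing every polynomial factor into the exponential: since $f\geqslant0$, for each fixed polynomial $P$ there is a constant $C_P$ with $|P(f)|\,e^{-f}\leqslant C_P\,e^{-f/2}$, whence every term is dominated by a constant multiple of $\int_\Sigma e^{-\frac{f}{2}\circ\widetilde x_s}\,dA_{\widetilde x_s^*g}$, which is uniformly bounded on $[-\log T,\infty)$ by Lemma~\ref{prop:4.7}. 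Summing the finitely many contributions then yields the desired constant $C'$ independent of $s$; the remaining work is the bookkeeping of the powers of $f$ and $|\nabla_g f|$ to confirm that each is dominated by $e^{f/2}$, everything else being a routine application of the uniform estimates already established.
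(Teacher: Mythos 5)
Your proposal is correct and follows essentially the same route as the paper: the paper's own proof consists of noting the soliton identities $0\leqslant|\nabla f|^2\leqslant f$ and $0\leqslant S\leqslant f$ (from \eqref{like Hamilton Identity} and $S\geqslant0$) and then invoking Lemma~\ref{prop:4.7} together with ``the same steps as in \cite{yamamoto2020meancurvature}'', which are precisely the steps you spell out — differentiating under the integral along the normalized flow, bounding coefficients via Proposition~\ref{Underthesamesatupofproposition01}, bounded geometry, \eqref{Additional-BG} and the soliton equation for $\nabla_g^2 f$, and absorbing the polynomial growth in $f$ and $|\nabla_g f|$ via $|P(f)|e^{-f}\leqslant C_P e^{-f/2}$ so that Lemma~\ref{prop:4.7} finishes the argument. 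Your write-up is in fact more detailed than the paper's, which delegates the computation to the cited reference.
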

\begin{proof}
We already know that we can assume $ S+|\nabla f|^2-f=0 $ and $S\geq0$ along the gradient shrinking extended Ricci soliton on $M$. So, 
$0\leqslant |\nabla f|^2 \leqslant f$ and $\quad 0\leqslant S \leqslant f$. The result of the lemma follows from Lemma~\ref{prop:4.7} and the same steps as done in~\cite{yamamoto2020meancurvature}.
\end{proof}

We are now ready to prove the main result of this note.

\begin{proof}[\bf Proof of Theorem~\ref{application:Huisken}]\label{test}
We will make this proof in a way that allows a generalization to the more general context of bounded geometry and satisfying \eqref{Additional-BG}. Note that these hypotheses for the compact case are automatically satisfied. 

Take a sequence $\{\widetilde \Sigma_{s_j}\}_{j=1}^\infty$ in $\widetilde{\mathscr F}$ and points $\{p_j\}_{j=1}^\infty$ in $\Sigma$, and denote the Riemann curvature tensor of each $\widetilde \Sigma_{s_j}$ by $\widehat{\operatorname{Rm}}(\widetilde x_{s_j}^* g)$. 

Firstly, we will show that the sequence of pointed manifolds $\{(\Sigma,\widetilde{x}^*_{s_j}g,p_j)\}$ converges to some complete pointed Riemannian manifold $(\Sigma_{\infty}, h_\infty, p_\infty)$  in the $C^\infty$ Cheeger-Gromov sense up to subsequence. 

Since $(M,g)$ has bounded geometry, there exist positive constants $D_p$ and $\eta$ such that
\begin{equation}\label{Aux-GEq-WL1}
|\nabla^p \operatorname{Rm}_g|
\leqslant D_p \quad \mbox{and} \quad \operatorname{inj}(M,  g) \geqslant \eta > 0.
\end{equation}
for every integer $p\geqslant 0$. Besides, since assumption \eqref{Additional-BG} holds, then by Proposition~\ref{Underthesamesatupofproposition01} there are positive constants $C_p$ which does not depend on $s_j$ such that
\begin{equation}\label{Aux-GEq-WL2}
|\widehat\nabla_g^p \mathcal A(\widetilde x_{s_j})| \leqslant C_p.
\end{equation}
Thus, we are able to apply Thm.~2.1 by Chen and Yin \cite{chen2007uniqueness} which guarantees the existence of a positive constant $\delta = \delta(C_0, D_0, \eta, n)$ such that the injectivity radius of each $(\Sigma,\widetilde{x}^*_{s_j}g)$ satisfies
\begin{equation*}
\operatorname{inj}(\Sigma,\widetilde{x}^*_{s_j}g) \geqslant \delta > 0.
\end{equation*}
Moreover, from \eqref{Aux-GEq-WL1}, \eqref{Aux-GEq-WL2}, the Gauss equation and its iterated derivatives, we obtain positive constants $\widetilde C_p$ which also do not depend on $s_j$ such that
\begin{equation*}
|\widehat\nabla^p \widehat{\operatorname{Rm}}(\widetilde x_{s_j}^* g)| \leqslant \widetilde C_p
\end{equation*}
for every integer $p\geqslant0$. Hence, by Arzelà-Ascoli theorem, there exists a subsequence $\{(\Sigma, \widetilde  x_{s_{j_k}}^* g, p_{j_k})\}$ which converges to some complete pointed Riemannian manifold $(\Sigma_\infty, h_\infty, p_\infty)$, i.e., there exist an exhaustion $\{U_{j_k}\}_{k=1}^\infty$ of $\Sigma_{\infty}$ with $p_{\infty} \in U_{j_k}$ and diffeomorphisms $\Psi_{j_k}:U_{j_k}\to V_{j_k}:= \Psi_{j_k}(U_{j_k}) \subset \Sigma$ with $\Psi_{j_k}(p_\infty) = p_{j_k}$ such that $\Psi^*_{j_k}(\widetilde x^*_{s_{j_k}} g)$ converges in $C^{\infty}$ to $h_\infty$ uniformly on compact sets in $\Sigma_{\infty}$.

Secondly, we will prove the existence of an immersion map $x_{\infty}: \Sigma_\infty\to (M,g)$. For this, let $\Theta: (M, g) \to (\mathbb R^d, g_{\rm st})$ be a Nash isometric embedding in some higher dimensional Euclidean space such that, for every integer $j \geqslant 0$, the norm $|\nabla_g^{j} \mathcal A(\Theta)| \leqslant \overline D_{j}$, for some constants $\overline D_{j}>0$, where $\mathcal A(\Theta)$ is the second fundamental form of $\Theta$.

\emph{At this point of the proof, we are assuming that the sequence $\{\widetilde x_{s_{j_k}}(p_{j_k})\}$ and the norms $|\nabla_g^{j} \mathcal A(\Theta)|$ are uniformly bounded in $M$, which are actually true in the compact case. Note, however, that we will need to prove or assume these facts for the noncompact case.}

Since we are working in the context of bounded geometry and each norm $|\nabla_g^{j} \mathcal A(\Theta)|$ is uniformly bounded, by setting 
\begin{equation*}
\overline x_{s_{j_k}} := \Theta \circ \widetilde x_{s_{j_k}} \circ \Psi_{j_k} :U_{j_k} \to (\mathbb R^d, g_{\rm st}),
\end{equation*}
we can prove that $|\nabla^p \overline x_{s_{j_k}}| \leqslant C_p$ for some positive constants $ C_p$ which does not depend on $s_{j_k}$, for every integer $p\geqslant0.$ The proof of this fact is by induction and follows the same steps as done in \cite[Appendix~C, pg.~42]{yamamoto2015meancurvature}. 

Then, by a standard argument as the Arzelà-Ascoli theorem, there exists a smooth map $\overline x_\infty: \Sigma_{\infty} \rightarrow (\mathbb R^d, g_{\rm st})$ such that the sequence of immersions $\overline x_{s_{j_k}}: U_{j_k} \to (\mathbb R^d,g_{\rm st})$ converges to $\overline x_{\infty}: \Sigma_{\infty}\rightarrow (\mathbb R^d,g_{\rm st})$ up to subsequence. By definition of $C^{\infty}$ convergence, we get $h_\infty =\overline x^*_\infty g_{\rm st}$ on $\Sigma_\infty$. This implies that $\overline x_\infty: \Sigma_{\infty} \rightarrow \mathbb R^d$ is an isometric immersion map and $x_\infty := \Theta^{-1} \circ \overline x_{\infty}: \Sigma_\infty \to (M, g)$ is the required immersion of the theorem with $x_\infty^*g = (\Theta^{-1} \circ \overline x_{\infty})^*\Theta^*g_{\rm st} = h_\infty$.

Next, we will prove that $\Sigma_{\infty}$ is a $f_{\infty}$-minimal hypersurface of $(M,g),$ where $f_\infty = f \circ x_\infty$. To simplify the notation we can take an exhaustion $\{U_k\}_{k=1}^\infty$ of $\Sigma_{\infty}$ with $p_{\infty} \in U_k$ and diffeomorphisms $\Psi_k : U_k \to V_k := \Psi_k(U_k) \subset \Sigma$ with $\Psi_k(p_\infty) = p_k$ such that $\Psi^*_k(\widetilde x^*_{s_{k}} g)$ converges in $C^{\infty}$ to $x^*_\infty  g$ uniformly on compact sets in $\Sigma_{\infty}$. Furthermore, the sequence of maps $\widetilde x_{s_{k}} \circ\Psi_k: U_k \rightarrow (M,g)$ converges in $C^{\infty}$ to $x_\infty: \Sigma_{\infty} \rightarrow(M,g)$ uniformly on compact sets in $\Sigma_{\infty}.$ So, for any compact set $K \subset \Sigma_\infty$ there exists $k_0$ such that $K \subset U_k$, for all $k \geqslant k_0$, and $\widetilde x_{s_{k}}\circ\Psi_k :U_k \rightarrow (M,g)$ converges to $x_\infty : \Sigma_\infty \to (M,g)$ in $C^\infty$ uniformly on $K$. Thus,
\begin{eqnarray*}
&&\int_K \left[H(\widetilde x_{s_{k}} \circ \Psi_k) + e_k\left(f\circ (\widetilde x_{s_{k}}\circ \Psi_k)\right) \right]^2 e^{- f\circ (\widetilde x_{s_{k}}\circ \Psi_k)} dA_{(\widetilde x_{s_{k}} \circ \Psi_k)^*g}\nonumber\\
&&\to \int_K \left( H( x_\infty) + e_\infty f_\infty\right)^ 2e^{- f_\infty }dA_{x^*_\infty g} 
\end{eqnarray*}
as $k\to \infty$, and 
\begin{eqnarray*}
&&\int_K \left[ H(\widetilde x_{s_{k}} \circ \Psi_k) +  e_k f (\widetilde x_{s_{k}}\circ\Psi_k)\right]^2e^{-f\circ (\widetilde x_{s_{k}}\circ \Psi_k)}dA_{(\widetilde x_{s_{k}} \circ\Psi_k)^* g}\\
&=&\int_{\Psi_k(K)}\left[H(\widetilde x_{s_{k}}) + e_k  (f\circ \widetilde x_{s_{k}}) \right]^2 e^{- f\circ \widetilde x_{s_{k}}} dA_{\widetilde x^*_{s_{k}}  g}\\
&\leqslant& \int_{\Sigma} \left[ H(\widetilde x_{s_{k}}) + e_k (f \circ \widetilde x_{s_{k}})\right]^2 e^{- f \circ\widetilde x_{s_{k}}} dA_{\widetilde x^*_{s_{k}}  g}.
\end{eqnarray*}
Hence, it is enough to prove the following: 
\begin{equation}\label{lastconv3}
\int_\Sigma [H(\widetilde x_{s_{k}})+ e_k( f \circ \widetilde x_{s_{k}})]^2 e^{-f\circ\widetilde x_{s_{k}}} dA_{\widetilde x_{s_{k}}^*g}\to 0
\end{equation}
as $k\to \infty$. We will argue by contradiction. Assume that there exist a constant $\delta>0$ and a subsequence $\{\ell\}\subset \{k\}$ with $\ell\to\infty$ such that 
\begin{equation*}
\int_\Sigma [H(\widetilde x_{s_{\ell}})+e_\ell(f \circ \widetilde x_{s_{\ell}})]^2 e^{-f\circ\widetilde x_{s_{\ell}}} dA_{\widetilde x_{s_{\ell}}^*g}\geqslant\delta.
\end{equation*}
Then
\begin{equation*}
\int_\Sigma [H(\widetilde x_s)+ e_s(f \circ \widetilde x_s)]^2 e^{-f\circ\widetilde x_s} dA_{\widetilde x_s^*g} \geqslant \frac{\delta}{2}, 
\end{equation*}
for $s\in[s_{\ell},s_{\ell}+\frac{\delta}{2C'}]$, 
where we used Lemma~\ref{Lem:4.10} and $C'$ is the constant appeared in that lemma. Hence, 
\begin{equation*}
\int_{-\log T}^\infty \int_\Sigma [H(\widetilde x_s)+ e_s(f \circ \widetilde x_s)]^2 e^{-f\circ\widetilde x_s} dA_{\widetilde x_s^*g} ds =\infty. 
\end{equation*}
On the other hand, by the monotonicity formula in Lemma~\ref{Prop:4.6}
\begin{equation*}
\frac{d}{ds} \int_\Sigma e^{-f\circ\widetilde x_s} dA_{\widetilde x_s^*g}=-\int_\Sigma [H(\widetilde x_s)+ e_s(f \circ \widetilde x_s)]^2 e^{-f\circ\widetilde x_s} dA_{\widetilde x_s^*g}\leqslant 0. 
\end{equation*}
Thus, the weighted volume 
\begin{equation*}
\int_\Sigma e^{-f\circ\widetilde x_s} dA_{\widetilde x_s^*g} 
\end{equation*}
is monotone decreasing and nonnegative. Therefore, it converges to some value 
\begin{equation*}
\alpha:=\lim_{s\to\infty}\int_\Sigma e^{-f\circ\widetilde x_s} dA_{\widetilde x_s^*g}<\infty, 
\end{equation*}
and then we obtain the following contradiction:
\begin{equation*}
\int_{-\log T}^\infty \int_\Sigma [H(\widetilde x_s)+ e_s(f \circ \widetilde x_s)]^2 e^{-f\circ\widetilde x_s} dA_{\widetilde x_s^*g}
=-\alpha +\int_\Sigma e^{-f\circ\widetilde x_a} dA_{\widetilde x_a^*g}<\infty,  
\end{equation*}
where $a:=-\log T$, which proves \eqref{lastconv3}. With this, the proof of the theorem is complete.
\end{proof}

\section{The noncompact case}\label{Sec-ncompcase}
In this section, we address the case of complete noncompact Riemannian manifolds $(M,g)$ with some additional uniformity conditions. We begin with a brief discussion on the reduced distance along the extended Ricci flow, which is a particular case defined by Müller for the context of Ricci Harmonic flow (see~\cite[Sect.~8]{muller2012ricci}), initially defined by Perelman in the Ricci flow setting (see~\cite[Sect.~7]{perelman2002entropy}).  

Let $(\overline g(t), \overline w(t))$ be a shrinking self-similar solution of the extended Ricci flow in $M \times [0, T)$.  For any smooth curve $\gamma:[t_{1},t_{2}]\to M$ with $0\leqslant t_{1}<t_{2}<T$, consider the $\mathcal{L}$-length of $\gamma$ by 
\begin{equation*}
\mathcal{L}(\gamma):=\int_{t_{1}}^{t_{2}}\sqrt{t_{2}-t}\bigl( S_{\overline g}+|\dot{\gamma}|^2 \bigr)dt, 
\end{equation*}
where $|\dot{\gamma}|$ is the norm of $\dot{\gamma}(t)$ measured by $\overline g$ and $S_{\overline g}= R_{\overline g} - \alpha_n |\nabla w|_{\overline g}^2$. 
For a fixed point $(q_{2},t_{2})$ in the space-time $M\times [0,T)$, Müller defined the reduced distance 
\begin{equation*}
\ell_{q_{2},t_{2}}:M\times [0,t_{2})\to\mathbb{R}
\end{equation*}
based at $(q_{2},t_{2})$ by
\begin{equation*}
\ell_{q_{2},t_{2}}(q_{1},t_{1}):=\frac{1}{2\sqrt{t_{2}-t_{1}}}\inf_{\gamma}\mathcal{L}(\gamma), 
\end{equation*}
where the infimum is taken over all smooth curve $\gamma:[t_{1},t_{2}]\to M$ with $\gamma(t_{1})=q_{1}$ and $\gamma(t_{2})=q_{2}$. 

In what follows, we assume that there exists a Nash isometric embedding $\Theta: (M, g) \to (\mathbb R^d, g_{\rm st})$ in some higher-dimensional Euclidean space such that, 
for every integer $j \geqslant 0,$ the second fundamental form $\mathcal A(\Theta)$ of $\Theta$ satisfies
\begin{equation}\label{CAdd-Thm3}
|\nabla_g^{j} \mathcal A(\Theta)| \leqslant \overline D_{j}
\end{equation}
for some constants $\overline D_{j}>0$. We observe that, under this assumption, $(M,g)$ must have bounded geometry by the Gauss equation (and its iterated derivatives) and Thm.~2.1 of \cite{chen2007uniqueness}.

\begin{theorem}\label{App-noncompact-case}
Assume that $(M,g)$ is an $n(\geqslant 3)-$dimensional complete noncompact Riemannian manifold, and let $(\overline{g}(t),\overline{w}(t))$ be a shrinking self-similar solution to the extended Ricci flow satisfying \eqref{Additional-BG} and \eqref{CAdd-Thm3} on $M$ with potential function $\overline{f}$ and initial value $(g,w)$. Given an $(n-1)-$dimensional compact smooth manifold $\Sigma$ without boundary, and let $\mathscr F$ be the MCF of $\Sigma$ in a gradient shrinking extended Ricci soliton background which develops a singularity of type-I. Consider the  normalized MCF $\widetilde{\mathscr F}$ in $(M,g)$. In addition, assume that there exists a point $q_{0}\in \Sigma$ such that the reduced distance $\ell_{x_{t}(q_{0}),t}$ converges pointwise to $f$ (as $t\to T$) on $M\times[0,T).$ Then, for any sequence $s_{1}<s_{2}<\cdots<s_{j}<\cdots \rightarrow \infty$ there exists a subsequence $s_{j_{k}}$ such that the family of immersion maps $\widetilde x_{s_{j_k}}: \Sigma\to (M,g)$ from pointed manifold $\big(\Sigma ,q_0\big)$ converges to an immersion map $x_\infty: \Sigma_\infty \to (M,g)$ from an $(n-1)-$dimensional complete pointed Riemannian manifold $\left(\Sigma_{\infty}, x_\infty ^* g, q_{\infty}\right)$ in the $C^\infty$ Cheeger–Gromov sense. Furthermore, $(\Sigma_{\infty}, x_\infty^* g)$ is an $f_\infty-$minimal hypersurface of  $(M, g),$ where $f_\infty = f \circ x_\infty$. 
\end{theorem}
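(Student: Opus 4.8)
The plan is to run the proof of Theorem~\ref{application:Huisken} with all base points taken to be the fixed point $q_0$, and to supply the two ingredients that were only \emph{assumed} in the emphasized passage of that proof: (i) the uniform boundedness of $|\nabla_g^{j}\mathcal A(\Theta)|$, and (ii) the confinement of the base-point sequence $\{\widetilde x_{s_{j_k}}(q_0)\}$ to a compact subset of $M$. Item (i) is now the standing hypothesis \eqref{CAdd-Thm3}, which, as observed just before the statement, already forces $(M,g)$ to have bounded geometry through the Gauss equation (and its iterated derivatives) and Thm.~2.1 of \cite{chen2007uniqueness}. Thus \eqref{Additional-BG} and bounded geometry hold, Proposition~\ref{Underthesamesatupofproposition01} applies, and the $s$-independent bounds $|\widehat\nabla_g^{p}\mathcal A(\widetilde x_{s_{j_k}})|\leqslant C_p$ are in force. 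All the novelty is therefore concentrated in item (ii), and this is precisely where the reduced-distance hypothesis is designed to act.

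To obtain (ii) I would first record, from $\overline f(t)=\psi_t^*f$ and $\widetilde x_s=\psi_t\circ x_t$ (Remark~\ref{Setting-MCF}), the identity $f(\widetilde x_s(q_0))=\overline f(x_t(q_0),t)$, so that confining the base points amounts to bounding $\overline f$ along the curve $t\mapsto x_t(q_0)$. The assumption that $\ell_{x_t(q_0),t}\to f$ pointwise as $t\to T$ is then to be combined with the reduced-distance theory for the extended Ricci flow of Müller~\cite[Sect.~8]{muller2012ricci} and the soliton identity \eqref{like Hamilton Identity}: for a shrinking soliton the reduced distance based at a space-time point is expected to be comparable to the potential (cf.\ Perelman~\cite[Sect.~7]{perelman2002entropy}), and the assumed convergence should pin $\overline f(x_t(q_0),t)$ to a value bounded uniformly in $t$. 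Since $\overline f\geqslant 0$ (from $S_{\overline g}\geqslant 0$ and \eqref{like Hamilton Identity}) and the shrinking potential $f$ has compact sublevel sets (the extended-Ricci analogue of the Cao–Zhou growth estimate, consistent with $S\geqslant0$ and $|\nabla f|^2\leqslant f$ recorded before Lemma~\ref{Lem:4.10}), a uniform bound on $f(\widetilde x_{s_{j_k}}(q_0))$ places the base points in a fixed compact sublevel set $\{f\leqslant c\}$ of $M$, which is exactly (ii).

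Granting (i) and (ii), the proof concludes word for word as in the compact case. Bounded geometry and Thm.~2.1 of \cite{chen2007uniqueness} give a uniform injectivity-radius lower bound for each $(\Sigma,\widetilde x_{s_{j_k}}^* g)$, while the Gauss equation upgrades Proposition~\ref{Underthesamesatupofproposition01} to bounds $|\widehat\nabla^{p}\widehat{\operatorname{Rm}}(\widetilde x_{s_{j_k}}^* g)|\leqslant\widetilde C_p$; Arzelà–Ascoli then extracts a subsequence converging in the $C^\infty$ Cheeger–Gromov sense to a complete pointed manifold $(\Sigma_\infty,h_\infty,q_\infty)$. Composing with the Nash embedding $\Theta$, the bound (ii) keeps the base-point images $\Theta(\widetilde x_{s_{j_k}}(q_0))$ in a fixed ball of $(\mathbb R^d,g_{\rm st})$, which together with the derivative estimates $|\nabla^{p}\overline x_{s_{j_k}}|\leqslant C_p$ from \eqref{CAdd-Thm3} and \cite[Appendix~C]{yamamoto2015meancurvature} lets Arzelà–Ascoli produce a limit $\overline x_\infty$, and hence $x_\infty=\Theta^{-1}\circ\overline x_\infty$ with $x_\infty^* g=h_\infty$. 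Finally, the monotonicity of Lemma~\ref{Prop:4.6}, the uniform weighted-area bound of Lemma~\ref{prop:4.7} and the second-derivative bound of Lemma~\ref{Lem:4.10} yield, by the same contradiction argument, that $\int_\Sigma(H_g(\widetilde x_{s_k})+e_k(f\circ\widetilde x_{s_k}))^2e^{-f\circ\widetilde x_{s_k}}\,dA_{\widetilde x_{s_k}^* g}\to0$; passing to the $C^\infty$ limit over an exhaustion of $\Sigma_\infty$ forces $H(x_\infty)+e_\infty f_\infty=0$, i.e.\ $(\Sigma_\infty,x_\infty^* g)$ is $f_\infty$-minimal with $f_\infty=f\circ x_\infty$.

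The main obstacle is item (ii): turning the qualitative convergence $\ell_{x_t(q_0),t}\to f$ into a genuine, $t$-uniform bound on $\overline f(x_t(q_0),t)$ demands the precise comparison between reduced distance and potential for the extended Ricci flow, together with the properness of $f$. Here the rescaling $\overline g(t)=(T-t)\psi_t^* g$ and its generating field $\nabla_g f/(T-t)$, which blows up as $t\to T$, make it essential to argue through $\overline f$ rather than through the $\overline g(t)$-distance, whose conversion to the fixed metric $g$ degenerates by the factor $(T-t)^{-1/2}$. Checking that these reduced-distance and soliton estimates combine ``in the right way'', as the authors put it, is the crux of the noncompact case.
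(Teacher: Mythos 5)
Your reduction is the same as the paper's: everything hinges on showing that the base points $\{\widetilde x_{s_j}(q_0)\}$ stay in a bounded subset of $(M,g)$, after which the compact-case proof of Theorem~\ref{application:Huisken} runs essentially verbatim (bounded geometry from \eqref{CAdd-Thm3} via the Gauss equation and Thm.~2.1 of \cite{chen2007uniqueness}, Proposition~\ref{Underthesamesatupofproposition01}, Arzel\`a--Ascoli through the Nash embedding, and the monotonicity contradiction argument for $f_\infty$-minimality). You also correctly identify the two facts needed to finish: a uniform bound on $f(\widetilde x_s(q_0))$, and properness of $f$ (the quadratic-growth estimate, which the paper takes from Thm.~5.1 of Wang~\cite{wang2016ricci}: $\tfrac14(r-C_1)^2\leqslant f\leqslant\tfrac14(r+C_2)^2$). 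But the first of these --- your item (ii) --- is exactly the step you leave unproved: you say the convergence $\ell_{x_t(q_0),t}\to f$ ``should pin'' $\overline f(x_t(q_0),t)$ to a bounded value and defer to ``the precise comparison between reduced distance and potential,'' explicitly flagging it as the crux. That is a genuine gap, not a technicality: it is the only place where the theorem's extra hypothesis and the type-I assumption enter the noncompact argument, and no general Perelman-type comparison theorem is invoked by the paper at all.

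The missing idea is elementary: test the infimum defining the reduced distance with the MCF trajectory itself. Taking $\gamma(t)=x_t(q_0)$ on $[t_1,t_2]$, one has $|\dot\gamma|_{\overline g}^2=H_{\overline g}^2$; the type-I hypothesis gives $(T-t)H_{\overline g}^2\leqslant C$, and $(T-t)S_{\overline g}=S_g$ is bounded by bounded geometry, whence
\begin{equation*}
\ell_{x_{t_2}(q_0),t_2}(x_{t_1}(q_0),t_1)\leqslant \frac{1}{2\sqrt{t_2-t_1}}\int_{t_1}^{t_2}\sqrt{t_2-t}\,\bigl(S_{\overline g}+H_{\overline g}^2\bigr)\,dt
\leqslant \frac{C}{2\sqrt{t_2-t_1}}\int_{t_1}^{t_2}\frac{dt}{\sqrt{T-t}}
\leqslant C\,\frac{\sqrt{T-t_1}}{\sqrt{t_2-t_1}}.
\end{equation*}
Fixing $t_1$ and letting $t_2\to T$, the pointwise convergence hypothesis turns the left-hand side into $f(x_{t_1}(q_0))$ while the right-hand side tends to $C$; hence $f(\widetilde x_s(q_0))=\overline f(x_t(q_0))\leqslant C$ for all $s\in[-\log T,\infty)$. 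Combined with Wang's lower bound $f\geqslant\tfrac14(r-C_1)^2$, this confines the base points to the ball of radius $2\sqrt C+C_1$, which is your item (ii). Note also that your parenthetical justification of properness via $|\nabla f|^2\leqslant f$ only yields the \emph{upper} growth bound on $f$; compactness of sublevel sets needs the \emph{lower} bound, which is the nontrivial half of Wang's estimate and must be cited, not derived from the soliton identities alone.
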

\begin{proof}
As we had already mentioned in the proof of the main theorem, it is enough to show that $\{\widetilde x_{s_{j}}(q_{0})\}_{j = 1}^\infty$ is a bounded sequence in $(M,g)$, since the remainder of the proof is the same as in the compact case.

We start by taking $t_1$, $t_2$ with $0\leqslant t_{1}<t_{2}<T$, and $\{x_t(q_0)\}_{t\in[t_1, t_2]}$ as a curve joining $x_{t_1}(q_0)$ and $x_{t_2}(q_0)$. Thus, 
\begin{eqnarray*}
\ell_{x_{t_2}(q_0),t_2}(x_{t_1}(q_0),t_1)&\leqslant& \frac{1}{2\sqrt{t_2-t_1}}\int_{t_{1}}^{t_{2}}\sqrt{t_{2}-t}\left( S_{\overline g} + \left|\frac{\partial x_{t}}{\partial t}\right|^2 \right)dt\\
&=& \frac{1}{2\sqrt{t_{2}-t_{1}}}\int_{t_{1}}^{t_{2}}\sqrt{t_{2}-t}\bigl( S_{\overline g} + H_{\overline g}^2 \bigr)dt.
\end{eqnarray*}
Since $\mathscr F$ develops a singularity of type-I, we have $(T-t)H_{\overline g}^2$ is bounded. Moreover, the bounded geometry assumption and the fact that $(T-t)S_{\overline g}=S_g$ imply $S_{\overline g} + H_{\overline g}^2\leqslant \frac{C}{T-t}$ for some positive constant $C$ which does not depend on $t.$ Hence,
\begin{eqnarray*}
&&\ell_{x_{t_2}(q_{0}),t_{2}}(x_{t_{1}}(q_0),t_{1})\\
&\leqslant&  \frac{C}{2\sqrt{t_{2}-t_{1}}}\int_{t_{1}}^{t_{2}}\frac{\sqrt{t_{2}-t}}{T-t}dt
\leqslant   \frac{C}{2\sqrt{t_{2}-t_{1}}}\int_{t_{1}}^{t_{2}}\frac{1}{\sqrt{T-t}}dt
\leqslant  C\frac{\sqrt{T-t_{1}}}{\sqrt{t_{2}-t_{1}}}. 
\end{eqnarray*}
By assumption $\ell_{x_t(q_0),t}$ converges pointwise to $f$ (as $t\to T$) on $M\times[0,T)$ and by taking the limit as $t_{2}\to T$, we have 
$f(x_{t_1}(q_0),t_1)\leqslant C$. As $f(x_t(q_0),t)=\overline f(x_t(q_0))=f(\widetilde x_s(q_0))$, one has $f(\widetilde x_s(q_0))\leqslant C$
for all $s\in[-\log T,\infty)$. Thm.~5.1 in  Wang~\cite{wang2016ricci} ensures that there exist positive constants $C_1$ and $C_2$ such that 
\begin{equation*}
\frac{1}{4}(r-C_1)^2\leqslant f\leqslant \frac{1}{4}(r+C_2)^2
\end{equation*}
on $M$, where $r(q)=d_g(q_0, q)$ is the distance function from any fixed point $q_0\in M$. Then
\begin{equation*}
d_g(q_0, \widetilde x_s(q_0))\leqslant 2\sqrt{C}+C_{1}, 
\end{equation*}
which means that $\{\widetilde x_{s_{j}}(q_{0})\}_{j = 1}^\infty$ is bounded in $(M,g)$ and the proof is complete.
\end{proof}

\section{Concluding remarks} \label{Concluding remarks}
We start this section by appending an example of $f-$minimal hypersurface of an Euclidean spherical cap. Next, we show how to construct a family of mean curvature solitons for the MCF in a gradient extended Ricci soliton background by means of radial smooth functions on Euclidean space. For explicit examples of MCF in a gradient extended Ricci soliton background by means of invariance under the action of a translation group in Euclidean space, see~\cite{gomes2023mean}. 

\begin{example}\label{example:f-minimal}
Consider $\mathbb S^n_\epsilon = \{x= (x_1, \ldots, x_{n +1}) \in \mathbb S^n ; x_{n + 1} \geqslant \epsilon \}$ the Euclidean spherical cap with boundary $\partial \mathbb S^n_\epsilon$, $\ 0<\epsilon<1$. Let us consider the function $f(x) = \frac{\epsilon(n - 1)}{\epsilon^2 - 1} h_v(x)$ on $\mathbb S_\epsilon^n$, where $h_v(x)=\langle x, v \rangle$ is the height function on $\mathbb S^n\subset \mathbb{R}^{n+1}$ with respect to $v=e_{n + 1}\in\mathbb R^{n+1}.$ 
So, $\partial \mathbb S^n_\epsilon = h_v^{-1}\{\epsilon\},$ and then  $|\nabla h_v|$ is a  constant $c$ when restrict to $\partial\mathbb S^n_\epsilon$, and $e_0 = \frac{\nabla h_v}{c}$ is the inward unit vector field along $\partial \mathbb S^n_\epsilon.$ Moreover, it is known that
$\operatorname{Hess}_{g_0} h_v = - h_vg_0$ on $\mathbb S^n_\epsilon$, where $g_0$ is the round metric, and the second fundamental form of $\partial\mathbb S^n_\epsilon$ is given by 
\begin{align}\label{SecondForm-Scap}
\mathcal A = -\nabla e_0 = -\dfrac{1}{c}\operatorname{Hess}_{g_0} h_v = \dfrac{h_v}{c}g_0 = \dfrac{\epsilon}{c}g_0   
\end{align}
that implies $H = \dfrac{\epsilon(n - 1)}{c}.$ Now, note that we can write $\overline\nabla f = \nabla f + \left\langle \overline\nabla f, \Vec{x} \right\rangle \Vec{x}.$ Since 
\[
f(x) = \frac{\epsilon(n - 1)}{ \epsilon^2 - 1}\langle x, e_{n + 1}\rangle=  \frac{\epsilon(n - 1)}{\epsilon^2 - 1} x_{n + 1},
\]
we have
\begin{align*}
\overline\nabla f = \frac{\epsilon(n - 1)}{\epsilon^2 - 1} e_{n + 1}\quad\hbox{and}\quad \nabla f = \frac{\epsilon(n - 1)}{ \epsilon^2 - 1}\left(-x_{n + 1}x_1, \ldots,-x_{n + 1}x_n, 1 -x_{n + 1}^2 \right).
\end{align*}
Hence, along $\partial\mathbb S^n_\epsilon$
$$e_0 (f) = \dfrac{1}{c} \nabla f\langle \Vec{x}, e_{n +1}\rangle =\dfrac{1}{c}  \left\langle \nabla f, e_{n + 1}\right\rangle = \dfrac{\epsilon(n - 1)}{c(\epsilon^2 - 1)}(1 -\epsilon^2) = -\dfrac{\epsilon(n - 1)}{c}.$$
Thus, the boundary $\partial\mathbb S^n_\epsilon$ is a $f-$minimal hypersurface of $\mathbb S^n_\epsilon.$ 
\end{example}

\begin{remark}
We observe that Example~\ref{example:f-minimal} corrects a mistake in Example~4 of \cite{gomes2023mean}.
\end{remark}

In what follows, we are using the approach of Section~7 in \cite{gomes2023mean} to show how to obtain explicit parameter functions for constructing mean curvature solitons for the MCF in a gradient extended Ricci soliton background by means of radial smooth functions on Euclidean space.

Let $g_{ij} = \frac{1}{F^2}\delta_{ij}$ be a Riemannian metric on $\mathbb R^n$, where $F$ is a nonzero smooth function on $\mathbb R^n$. For this metric, we want to obtain parameter functions for
\begin{subequations}\label{Bizu}
\begin{empheq}[left=\empheqlbrace]{align}
&\operatorname{Ric}_{g} + \operatorname{Hess}_{g} f - \alpha_n d w  \otimes  d w = \lambda g,\label{Bizu11}\\
&\Delta_{g} w = \langle\nabla_{g} f, \nabla_{g} w\rangle_{g}\label{Bizu12}.
\end{empheq}
\end{subequations}
Since the metric $g_{ij}$ is conformal to $\delta_{ij}$, it is well known (see, e.g., \cite{besse2007einstein}) that 
\begin{align*}
(\operatorname{Ric}_{g})_{ij} &=\frac{1}{F^2}\Big[(n - 2)FF_{x_ix_j} + \Big(F\sum_k F_{x_k x_k} - (n - 1) \sum_k F^2_{x_k}\Big)\delta_{ij}\Big]\quad \forall i,j\\
(\operatorname{Hess}_g h)_{ij} &= h_{x_ix_j} +\frac{F_{x_j}}{F} h_{x_i} +\frac{F_{x_i}}{F} h_{x_j} \quad \forall i \neq j\\
(\operatorname{Hess}_g h)_{ii} &= h_{x_ix_i} + 2\frac{F_{x_i}}{F} h_{x_i} -\sum_k\frac{F_{x_k}}{F} h_{x_k} \quad \forall i
\end{align*}
for any smooth function $h$ on $\mathbb R^n$. Hence,
\[\Delta_{g} h = F^2 \Big(\sum_k h_{x_kx_k} + (2 - n)  \sum_k\frac{F_{x_k}}{F}h_{x_k}\Big).\]

\begin{proposition}\label{Prop-radFunc}
Consider $\mathbb R^n$ with the metric $g_{ij} = \frac{1}{F^2(r)}\delta_{ij}$, for some nonzero smooth function $F$ on $\mathbb R^n$ which depends only on $r  = \|x\|$. We can obtain smooth functions  $f (r)$ and $ w(r)$ satisfying~\eqref{Bizu11} (as well~\eqref{Bizu12}) by means of the system
\begin{align*}
\left\{
\begin{array}{rcl}
\dfrac{(2n - 3)F'} {rF}  + \dfrac{f'}{r} +\dfrac{F''}{F} - (n - 1)\Big(\dfrac{F'}{F}\Big)^2 -\dfrac{F'}{F}f'  &=& \dfrac{\lambda}{F^2}\\[2ex]
w'' + \Big(\dfrac{n - 1}{r}  - (n-2)\dfrac{F'}{F} - f'\Big)w'  &=& 0
\end{array}
\right.
\end{align*}
in $\mathbb{R}^n\setminus\{0\}$, where the superscript $'$ denotes the derivative with respect to $r.$
\end{proposition}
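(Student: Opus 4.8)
The plan is to substitute the rotationally symmetric ansatz $F=F(r)$, $f=f(r)$, $w=w(r)$ into the coordinate formulas for $\operatorname{Ric}_g$, $\operatorname{Hess}_g$ and $\Delta_g$ recorded immediately before the statement, and then to split the tensorial equation \eqref{Bizu11} into its two rotationally invariant pieces. First I would record, for a radial function $h(r)$, the elementary identities $h_{x_i}=h'\,x_i/r$ and $h_{x_ix_j}=\big(h''-h'/r\big)x_ix_j/r^2+(h'/r)\delta_{ij}$, together with $\sum_k x_k^2=r^2$, from which $\sum_k h_{x_kx_k}=h''+(n-1)h'/r$ and $\sum_k F_{x_k}h_{x_k}=F'h'$. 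With these every tensor in \eqref{Bizu11} reduces to a combination of the two pointwise linearly independent symmetric tensors $\delta_{ij}$ and $x_ix_j/r^2$, so the equation is equivalent to the pair of scalar relations obtained by matching their coefficients.

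Carrying out the substitution, the quoted Ricci formula contributes $\tfrac{F''}{F}+\tfrac{(2n-3)F'}{rF}-(n-1)\big(\tfrac{F'}{F}\big)^2$ to the $\delta_{ij}$-coefficient and $\tfrac{n-2}{F}\big(F''-\tfrac{F'}{r}\big)$ to the $x_ix_j/r^2$-coefficient; writing $\operatorname{Hess}_g f$ uniformly as $f_{x_ix_j}+\tfrac1F\big(F_{x_j}f_{x_i}+F_{x_i}f_{x_j}\big)-\delta_{ij}\sum_k\tfrac{F_{x_k}f_{x_k}}{F}$ and inserting the radial identities yields $\tfrac{f'}{r}-\tfrac{F'}{F}f'$ for its $\delta_{ij}$-coefficient and $f''-\tfrac{f'}{r}+\tfrac{2F'f'}{F}$ for its $x_ix_j/r^2$-coefficient. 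Since $dw\otimes dw=(w')^2\,x_ix_j/r^2$ is purely radial and $\lambda g=\tfrac{\lambda}{F^2}\delta_{ij}$ is purely of $\delta$-type, matching $\delta_{ij}$-coefficients in \eqref{Bizu11} reproduces precisely the first ODE of the system, while matching $x_ix_j/r^2$-coefficients gives the auxiliary radial relation $\tfrac{n-2}{F}\big(F''-\tfrac{F'}{r}\big)+f''-\tfrac{f'}{r}+\tfrac{2F'f'}{F}=\alpha_n(w')^2$.

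For \eqref{Bizu12} I would put $h=w$ in the displayed expression for $\Delta_g h$, obtaining $\Delta_g w=F^2\big(w''+\tfrac{n-1}{r}w'-(n-2)\tfrac{F'}{F}w'\big)$, and compute $\langle\nabla_g f,\nabla_g w\rangle_g=g^{ij}f_{x_i}w_{x_j}=F^2 f'w'$; equating the two and dividing by $F^2$ gives the second ODE verbatim. The construction is then sequential: the first ODE is algebraic in $f'$ (no $f''$ appears), so it fixes $f'$, hence $f$, from $F$ and $\lambda$, and the second ODE is first order and linear in $w'$, so $w$ follows by two quadratures.

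The step I expect to be the crux is the radial relation displayed at the end of the second paragraph, which is not among the two listed equations: once $f$ and $w$ have been produced by the system, this relation must be shown to hold automatically, i.e.\ that the $x_ix_j/r^2$-component of \eqref{Bizu11} is a consequence of the first ODE and \eqref{Bizu12}. Here I would invoke the contracted second Bianchi identity for the soliton tensor $E:=\operatorname{Ric}_g+\operatorname{Hess}_g f-\alpha_n\,dw\otimes dw-\lambda g$, which is exactly the mechanism behind the first integral \eqref{like Hamilton Identity}, namely $S_g+|\nabla f|^2-f=0$. Rewriting that integral in radial form via $|\nabla f|^2=F^2(f')^2$, $\alpha_n|\nabla w|^2=\alpha_n F^2(w')^2$ and $R_g=F^2\big(nA+B\big)$, with $A$ and $B$ the $\delta$- and radial coefficients of $\operatorname{Ric}_g$ computed above, lets me solve for $\alpha_n(w')^2$ and substitute it into the radial relation; the outcome should reduce, after using the first ODE, to an identity equivalent to \eqref{like Hamilton Identity}. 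Carrying out this compatibility check cleanly, as opposed to the routine coefficient bookkeeping, is where the real work lies.
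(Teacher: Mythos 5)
Your radial computations in the first two paragraphs are correct and are, up to bookkeeping, exactly the paper's proof: the paper splits \eqref{Bizu11} into off-diagonal components \eqref{Holds} and diagonal components \eqref{BIZU1}, which after the radial substitution become \eqref{GuerraeSUA} and \eqref{GuerraeSUA1}; this is the same as your decomposition into $x_ix_j/r^2$- and $\delta_{ij}$-parts, and it yields the same three scalar relations (your two ODEs plus your ``auxiliary radial relation''). The divergence is in how that auxiliary relation is handled, and there your plan fails. The paper reads the reduction in the necessity direction: since \eqref{Bizu11} is assumed to hold, the off-diagonal relation \eqref{GuerraeSUA} is available as a hypothesis, it is used to cancel the $x_ix_j$-terms in \eqref{GuerraeSUA1}, the first ODE remains, and the proof stops there. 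You instead set yourself the converse task: to show that the auxiliary relation is an automatic consequence of the first ODE together with \eqref{Bizu12}, via the Bianchi-type mechanism behind \eqref{like Hamilton Identity}. That implication is false, so no amount of care will close this step. Counterexample on $\mathbb{R}^n\setminus\{0\}$: take $F\equiv 1$ and $\lambda=0$. The first ODE forces $f'=0$, and the second ODE then gives $w'=c\,r^{1-n}$ with $c\neq 0$ allowed; both displayed equations hold, and \eqref{Bizu12} holds ($w$ is harmonic and $\nabla_g f=0$), yet
\[
\operatorname{Ric}_g+\operatorname{Hess}_g f-\alpha_n\,dw\otimes dw-\lambda g=-\alpha_n c^2 r^{2-2n}\,dr\otimes dr\neq 0,
\]
so \eqref{Bizu11} fails.

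The structural reason the Bianchi argument cannot work is that the first integral \eqref{like Hamilton Identity} is obtained by taking the divergence of the \emph{full} tensor equation \eqref{mod_grad_Ricci_soliton}; it presupposes all components of \eqref{Bizu11} and therefore cannot be used to recover one component (the $dr\otimes dr$ part) from the others. Your auxiliary relation
\[
\alpha_n (w')^2 \;=\; \frac{n-2}{F}\Big(F''-\frac{F'}{r}\Big)+f''-\frac{f'}{r}+\frac{2F'f'}{F}
\]
is a genuine additional constraint — it is precisely the paper's \eqref{GuerraeSUA} — and its compatibility with the second ODE is a nontrivial condition on $(F,f,\lambda)$, not an identity. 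So your first two paragraphs already contain the entire content of the paper's proof; the ``crux'' you defer to in the third paragraph should be dropped, and replaced by the observation that the off-diagonal part of \eqref{Bizu11} must be retained as part of the reduced system rather than derived from it.
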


\begin{proof}
We need to analyze \eqref{Bizu11} in two cases. For $i \neq j$, it rewrites as
\begin{align}\label{Holds}
(n - 2) \frac{F_{x_ix_j}}{F} + f_{x_i x_j} +\frac{F_{x_j}}{F} f_{x_i} +\frac{F_{x_i}}{F}f_{x_j} - \alpha_n w_{x_i} w_{x_j} = 0,
\end{align}
and for $i=j$,
\begin{align}\label{BIZU1}
\nonumber &(n - 2) \frac{F_{x_ix_i}}{F} + \sum_k \frac{ F_{x_k x_k} }{F} - (n - 1)\sum_k\frac{F^2_{x_k}}{F^2} + f_{x_i x_i} +2\frac{F_{x_i}}{F} f_{x_i}\\
&-\sum_k\frac{F_{x_k}}{F}f_{x_k} - \alpha_n w^2_{x_i}  = \frac{\lambda}{F^2}.
\end{align}
While \eqref{Bizu12}, it rewrites as
\begin{align}\label{add:Naza}
\sum_k w_{x_kx_k} + (2 - n)  \sum_k\frac{F_{x_k}}{F}w_{x_k} = \sum_kf_{x_k}w_{x_k}.
\end{align}
Next observe that for any radial smooth function $h(r)$ on $\mathbb{R}^n$, we have $h_{x_i}= h'x_i/r$ and $h_{x_ix_j} =x_ix_j ( h''/r^2 - h'/r^3)$, for all $i \neq j$. Besides, $h_{x_i}= h'x_i/r$ and $h_{x_ix_i} =  x_i^2(h''/r^2 - h'/r^3) +  h'/r ,$  for all $i.$ Thus, from \eqref{Holds} and~\eqref{BIZU1}, we obtain
\begin{align}\label{GuerraeSUA}
x_ix_j\Big[\frac{n - 2 }{F}\Big(\frac{F''}{r^2} - \frac{F'}{r^3}\Big) + \frac{f''}{r^2} - \frac{f'}{r^3} +2\frac{F'}{F} \frac{f'}{r^2}- \dfrac{\alpha_n}{r^2} {w'}^2\Big] = 0 \quad\forall i\neq j
\end{align}
and
\begin{align}\label{GuerraeSUA1}
&x_i^2\Big[\frac{n - 2} {F}\Big(\frac{F''}{r^2} - \frac{F'}{r^3}\Big)  + \frac{f''}{r^2} - \frac{f'}{r^3} +2\frac{F'}{F} \frac{f'}{r^2}  - \frac{\alpha_n}{r^2} {w'}^2  \Big] +  \frac{(n - 2)F'} {rF}  \nonumber\\
&+ \frac{f'}{r} +\frac{F''}{F} 
+ \frac{(n - 1)F'}{rF}    - (n - 1)\Big(\frac{F'}{F}\Big)^2 -\frac{F'}{F}f'  = \frac{\lambda}{F^2}\quad \forall i.
\end{align}
From \eqref{GuerraeSUA} we prove that~\eqref{GuerraeSUA1} reduces to
 \begin{align*}
&\frac{(n - 2)F'} {rF}  + \frac{f'}{r} +\frac{F''}{F} 
+ \frac{(n - 1)F'}{rF}    - (n - 1)\Big(\frac{F'}{F}\Big)^2 -\frac{F'}{F}f'  = \frac{\lambda}{F^2}.
\end{align*}
Likewise, \eqref{add:Naza} becomes
\begin{align*}
w'' + \Big(\dfrac{n - 1}{r}  - (n-2)\dfrac{F'}{F}\Big)w'  = f'w'.
\end{align*}
This proves the proposition.
\end{proof}

For constructing a family of mean curvature solitons for the MCF in the corresponding self-similar solution to the $(\overline g, \overline w)$-extended Ricci flow background on $M$, it is enough to use Proposition~\ref{Prop-radFunc} and consider an $f$-minimal hypersurface $\Sigma$ of $M$ (although we know that hard work is needed to find $f$-minimal hypersurfaces for this case), and then one can proceed as in \cite[Thm.~3]{gomes2023mean} to obtain such a family.

It would be great if Example~\ref{example:f-minimal} could be used to construct a family of mean curvature solitons for the MCF in an extended Ricci flow background. However, if we could use the parameter functions given in this example in Proposition~\ref{Prop-radFunc}, then we would also allow $\lambda$ to be a nonconstant radial smooth function, which may be useful in further studies. Indeed, we know that the stereographic projection 
$$\pi^{-1}:\big(\mathbb{R}^n, g\big)\to \mathbb{S}^n\setminus \{(0,\ldots,-1)\}$$
given by $$\pi^{-1}(x_1,\ldots,x_n)=\Big(\frac{2x_1}{1+r^2},\ldots,\frac{2x_n}{1+r^2}, \frac{1-r^2}{1+r^2}\Big)$$
is an isometry, where $g$ is given by $\frac{4}{(1+r^2)^2}\delta_{ij}$. Then, we can take $\Sigma =\pi(\partial\mathbb S^n_\epsilon),$  $\widetilde{f}=f\circ\pi^{-1} = \frac{\epsilon(n - 1)(1 - r^2)}{(\epsilon^2 - 1)(1 + r^2)}$ and $F(r)=\frac{1+r^2}{2}$ so that $\Sigma$ is a $\widetilde{f}$-minimal hypersurface of $(\mathbb{R}^n, g)$. Now, we define $\overline g(t) := \kappa (T - t) \psi^*_t g$ and  $x(\cdot, t) := \psi(\cdot, -t + 2(T -\kappa)),$  for $\kappa = 1,$ with $t \in \big(2(T-1), T\big)$ and $\psi_{T - 1} = \operatorname{Id}$ in the shrinking case; and for $\kappa = -1,$ with $t \in \left(T, 2(T+1)\right)$ and $\psi_{T + 1} = \operatorname{Id}$ in the expanding case, where $\psi_t$ is a smooth one-parameter family of diffeomorphisms of $\mathbb R^n$ generated from the flow of $\nabla_g \widetilde{f}/\kappa (T - t).$ By taking $F$ and $\widetilde f$ in Proposition~\ref{Prop-radFunc} we get  $w''+\psi(r)w'=0$, for some $\psi(r)$, from which we obtain $w(r)$ and $\lambda(r)=n-1 +\frac{\varepsilon}{\varepsilon^2-1}(n-1)(r^2-1).$ Besides, from \eqref{SecondForm-Scap}, we have $|\mathcal A(\cdot, t)| = \frac{\epsilon (n - 1)}{c\sqrt{T - t}}$ on $\Sigma_t$, and then it would be the corresponding type-I singularity for the shrinking case.

\section{Acknowledgments}
José N.V. Gomes has been partially supported by Conselho Nacional de Desenvolvimento Científico e Tecnológico (CNPq), Grant 310458/2021-8, and Fundação de Amparo à Pesquisa do Estado de São Paulo (FAPESP), Grants 2023/11126-7, 2022/16097-2 and 2024/00923-6. Matheus Hudson has been partially supported by Fundação de Amparo à Pesquisa do Estado de São Paulo (FAPESP), Grants 2023/13921-9 and  2024/20015-7. Hikaru Yamamoto has been partially supported by JSPS KAKENHI Grant-in-Aid for Early-Career Scientists 22K13909. The authors would like to thank the referee for the careful reading and for the useful comments that improved the paper.

\end{document}